\newtheorem{proposition}{Proposition}[section]
\newtheorem{theorem}[proposition]{Theorem}
\newtheorem{corollary}[proposition]{Corollary}
\newtheorem{lemma}[proposition]{Lemma}
\theoremstyle{definition}
\newtheorem{definition}[proposition]{Definition}
\newtheorem{remark}[proposition]{Remark}
\numberwithin{equation}{section}
 \def\R{\mathbb{R}}
  \def\Z{\mathbb{Z}}
  \def\C{\mathbb{C}}
  \def\ve{\varepsilon}
\newcommand{\ee}{\mathrm{e}}
\newcommand{\p}{\partial}
\newcommand{\jj}{J}
\newcommand{\na}{\nabla }
\renewcommand{\d}{\mathrm{d}}
\renewcommand{\div}{\mathrm{div}}
\newcommand{\re}{\mathrm{Re}}
\newcommand{\im}{\mathrm{Im}}
\newcommand{\res}{\mathrm{res}}
\newcommand{\eqnb}{\begin{equation}}
\newcommand{\eqne}{\end{equation}}
\newcommand{\loc}{\mathrm{loc}}
\renewcommand*{\thefootnote}{(\arabic{footnote})}
\newcommand\blfootnote[1]{%
  \begingroup
  \renewcommand\thefootnote{}\footnote{#1}%
  \addtocounter{footnote}{-1}%
  \endgroup
}
\title[Logarithmic spiral vortex sheets]{Well-posedness of logarithmic spiral vortex sheets}
\author{Tomasz Cie\'{s}lak, Piotr Kokocki and Wojciech S. O\.za\'nski}
\begin{document}

\begin{abstract}
We consider a family of 2D logarithmic spiral vortex sheets which include the celebrated spirals introduced by Prandtl \emph{(Vortr\"age aus dem Gebiete der Hydro- und Aerodynamik, 1922)} and by Alexander \emph{(Phys. Fluids, 1971)}. 
We prove that for each such spiral the normal component of the velocity field remains continuous across the spiral. We give sufficient conditions for spiral vortex sheets to be weak solutions of the 2D incompressible Euler equations. Namely, we show that a spiral gives rise to such a solution if and only if two conditions hold across every spiral: a \emph{velocity matching} condition and a \emph{pressure matching} condition. Furthermore we show that these two conditions are equivalent to the imaginary part and the real part, respectively, of a single complex constraint on the coefficients of the spirals. This in particular provides a rigorous mathematical framework for logarithmic spirals, an issue that has remained open since their introduction by Prandtl in 1922. Another consequence of the main result is well-posedness of the symmetric Alexander spiral with two branches, despite recent evidence for the contrary. 
Moreover, our result implies a sharpness result of Delort's theorem on global existence of solutions to the Euler equations with initial vorticity measure.
Our main tools are new explicit formulas for the velocity field and for the pressure function, as well as a notion of a \emph{winding number} of a spiral, which not only gives a robust way of localizing the spirals' arms with respect to a given point in the plane, but also ensures correct asymptotic behaviour near $0$.
\end{abstract}



\maketitle
\blfootnote{T.~Cie\'{s}lak: Institute of Mathematics, Polish Academy of Sciences, \'Sniadeckich 8, 00-656 Warszawa, Poland, email: cieslak@impan.pl  

P.~Kokocki: Faculty of Mathematics and Computer Science, Nicolaus Copernicus University, Chopina 12/18, 87-100 Toru\'n, Poland, email: pkokocki@mat.umk.pl

W. S. O\.za\'nski: Department of Mathematics, University of Southern California, Los Angeles, CA 90089, USA, and Institute of Mathematics, Polish Academy of Sciences, \'Sniadeckich 8, 00-656 Warszawa, Poland, email: ozanski@usc.edu }
\section{Introduction}
We are concerned with the two-dimensional incompressible Euler equations,
\eqnb\label{euler_intro}
\begin{cases}
&\p_t v + v\cdot\nabla v + \nabla p =0, \\
&\div \, v =0,
\end{cases}
\eqne
on $\R^2 \times (0,\infty )$, where $v$ denotes the velocity of an inviscid fluid and $p$ denotes the pressure function.
By considering the vorticity $
\omega \coloneqq \p_1 v_2 - \p_2 v_1$ we can rewrite the Euler equations \eqref{euler_intro} in the vorticity form 
\eqnb\label{euler_vort}
\p_t \omega + v \cdot \nabla\omega =0,
\eqne
where $v$ is recovered from $\omega$ by the Biot-Savart law,
\eqnb\label{bs_law}
v = K \ast \omega ,\qquad \text{ where }\qquad K(x)\coloneqq  \frac{1}{2\pi } \frac{x^\perp}{|x|^2}, \quad (x_1,x_2)^\perp \coloneqq (-x_2, x_1).
\eqne

One can also consider a \emph{weak solution of \eqref{euler_intro}} as any 
weakly divergence-free vector field $v\in L^2_{\loc} (\R^2 \times (0,\infty ))$ such that 
\eqnb\label{weak_euler}
\int_0^\infty \int_{\R^2} \left( v \cdot \p_t \varphi + \sum_{1\le i,j\le 2}v_i v_j \p_i \varphi_j \right) =0
\eqne
for all divergence-free $\varphi \in C_0^\infty (\R^2 \times (0, \infty );\R^{2})$.

The main purpose of this work (see Theorem~\ref{thm_main} below) is to provide a characterization of solutions of the $2$D incompressible Euler equations in the form of self-similar vortex sheets. To this end we introduce
\begin{definition}[Vortex sheet velocity fields]\label{def_velocity}
We say that $v\colon \R^2 \to \R^2$ is a \emph{vortex sheet velocity field} if
\eqnb\label{def_of_v} v (x,y,t) =  t^{\mu -1} w \left( \frac{x}{t^\mu }, \frac{y}{t^\mu } \right)
\eqne
for some $w \in L^2_{loc} (\R^2 )$ such that 
\begin{enumerate}
\item $\Omega \coloneqq \mathrm{curl}\, w = \p_1 w_2 - \p_2 w_1$ is supported on a curve $\Sigma  \subset \R^2$ that is locally smooth on $\R^2 \setminus \{ 0 \}$,
\item $w\cdot n$ is continuous across $\Sigma$, where $n(z) $ is a normal vector to $\Sigma $,
\item $w$ is a potential flow away from $\Sigma$ in the sense that $w^* (z)= \Phi'(z)$ for $z\in \C \setminus (\Sigma \cup \{ 0 \}) $, where $\Phi$ is holomorphic for such $z$, and $w$ is continuous up to $\Sigma$ from either side,
\item $\int_{\p B(0,\eta )} (|w|^2 + |q| ) \d s \to 0 $ as $\eta \to 0$, where $q$ denotes the self-similar pressure profile using the Bernoulli's law in the complex form,
\eqnb\label{Bernoulli_complex_ss}
q(z) \coloneqq - \re \left( (2\mu -1) \Phi(z) - \mu z w^*(z) \right) - \frac{1}2 |w(z)|^2 .
\eqne
\end{enumerate}
\end{definition}
 Here we used the complex representation $z=x+iy=r\ee^{i\theta }\in\C\setminus(\Sigma\cup\{0\})$ of a point $(x,y)\in \R^2 $, the complex representation $w= w_1+iw_2$ of a velocity field $(w_1,w_2)\in \R^2$, and we denoted the complex conjugate of $z\in \C$ by $z^*$. Note also that, given self-similar profiles $\Omega$, $q$, $\Phi$, we have 
\[
\omega(z,t) =  t^{-1 } \Omega \left( \frac{z}{t^\mu } \right),\qquad p(z,t) =  t^{2\mu-2 } q\left( \frac{z}{t^\mu } \right),\qquad \Psi(z,t) =  t^{2\mu-1 } \Phi \left( \frac{z}{t^\mu } \right),
\] 
where $\Psi$ denotes the potential function, and the Bernoulli law \eqref{Bernoulli_complex_ss} takes the form 
\eqnb\label{def_of_p}
p(z,t) \coloneqq - \p_t\re\, \Psi (z,t) - \frac12 |v(z,t)|^2.
\eqne

Our main result characterizes vortex sheet velocity fields as weak solutions of the Euler equations \eqref{weak_euler}.

\begin{theorem}[Main result]\label{thm_main}
A vortex sheet velocity field $v$ is a weak solution of the Euler equations \eqref{euler_intro} if and only if the \emph{velocity matching} condition,
\eqnb\label{velocity_matching}
n(z,t) \cdot \left( v(z,t) t - \mu z \right) =0, 
\eqne
 and the \emph{pressure matching} condition,
\eqnb\label{pressure_matching}
p (z,t)\text{ is continuous at }z,
\eqne
hold for all $z\in \Sigma (t)$, $t>0$, where $n(z,t) $ is a normal vector to $\Sigma (t)$ at $z\in \Sigma (t)$, and $p$ is given by the Bernoulli law \eqref{def_of_p}.
\end{theorem}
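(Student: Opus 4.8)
The plan is to reduce the weak identity \eqref{weak_euler} to a sum of jump integrals over the space-time trace of the sheet plus a remainder localized at the origin, and then to read off the two matching conditions from the requirement that this sum vanish for every divergence-free test field. Both implications will come out of the same computation, so I would establish the equivalence in one pass.

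First I would record that away from $\Sigma\cup\{0\}$ the field $v$ is a genuine classical solution. Since $w^*=\Phi'$ with $\Phi$ holomorphic, $v$ is a potential flow with $v^*=\p_z\Psi$, and the identity $\p_t v+(v\cdot\na)v=\na\!\left(\p_t\re\,\Psi+\tfrac12|v|^2\right)$ shows that $v$ solves \eqref{euler_intro} classically with $p$ exactly the Bernoulli pressure \eqref{def_of_p}; a short self-similar computation confirms $p(z,t)=t^{2\mu-2}q(z/t^\mu)$ with $q$ as in \eqref{Bernoulli_complex_ss}. Hence, for each $j$, the space-time field $G^{(j)}\coloneqq(v_j,\,v_1v_j+p\delta_{1j},\,v_2v_j+p\delta_{2j})$ is divergence-free in $(t,x)$ on each component of the complement of $\Sigma$. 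Because $\div\varphi=0$, the integrand of \eqref{weak_euler} equals $\sum_j G^{(j)}\cdot\na_{t,x}\varphi_j$, so adjoining the pressure costs nothing. Applying the space-time divergence theorem on each smooth region (after excising a tube $\{|x|<\eta\}$ about the origin) and using $\mathrm{Div}\,G^{(j)}=0$ collapses the bulk, leaving only the space-time sheet $\Gamma=\{(z,t):z\in\Sigma(t)\}$ and the lateral tube $\{|x|=\eta\}$. The tube term is $O\!\left(\int\!\!\int_{|x|=\eta}(|v|^2+|p|)\right)$, which vanishes as $\eta\to0$ by condition (4) of Definition~\ref{def_velocity} after undoing the scaling (the $t$-support of $\varphi$ is compact in $(0,\infty)$, so the scaling constants stay bounded).

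Since $\Sigma(t)=t^\mu\Sigma(1)$, a point $z(t)=t^\mu z_0\in\Sigma(t)$ moves with normal speed $V_n=\tfrac{\mu}{t}(z\cdot n)$, so the space-time unit normal to $\Gamma$ is $N\propto(-V_n,n)$ and
\[
[G^{(j)}\cdot N]\ \propto\ [(v\cdot n-V_n)\,v_j]+[p]\,n_j .
\]
Writing the jump vector as $J=[(v\cdot n-V_n)\,v]+[p]\,n$ and using that the normal and tangential traces of a divergence-free field can be prescribed freely (locally $\varphi=\na^\perp\psi$ with arbitrary Cauchy data $\psi|_\Sigma,\ \p_n\psi|_\Sigma$), the weak identity is equivalent to $J\equiv0$ on $\Gamma$. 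Projecting $J$ onto $n$ and $\tau$, and using that $a\coloneqq v\cdot n$ is continuous by condition (2) (so $a-V_n$ is single-valued and $[a]=0$), yields $(a-V_n)[v\cdot\tau]=0$ and $[p]=0$. The normal part $[p]=0$ is precisely pressure matching \eqref{pressure_matching} (equivalently $[q]=0$). The tangential part forces $a=V_n$ wherever the sheet strength $[v\cdot\tau]$ is nonzero, i.e. on $\Sigma=\mathrm{supp}\,\Omega$; since $a=V_n$ is exactly $n\cdot(vt-\mu z)=0$, this is velocity matching \eqref{velocity_matching}, and the smoothness in conditions (1),(3) upgrades the a.e. statement to every $z\in\Sigma(t)$. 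Conversely, if both conditions hold then $a-V_n=0$ from either side and $[p]=0$, so $J\equiv0$ and \eqref{weak_euler} follows.

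The main obstacle is making the divergence-theorem step rigorous on the spiral domain, whose arms accumulate at $0$ with infinite total length: one must localize the boundary estimate near the origin—this is where the winding-number device and condition (4) enter—and justify that only $\Gamma$ and the vanishing origin tube contribute, despite the infinitely many turns. A secondary subtlety is the tangential dichotomy above: concluding velocity matching on all of $\Sigma$ requires knowing that the vortex-sheet strength $[v\cdot\tau]$ does not vanish there, i.e. that the sheet is genuine rather than a removable discontinuity.
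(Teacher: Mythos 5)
Your proposal is correct and follows essentially the same route as the paper: both arguments reduce the weak formulation \eqref{weak_euler} to jump conditions on the sheet by using that $v$ solves Euler classically off $\Sigma\cup\{0\}$ with the Bernoulli pressure, smuggling the pressure into the test integrand via $\div\,\varphi=0$, killing the origin contribution with condition (4) of Definition~\ref{def_velocity}, and factoring the jump using the continuity of $v\cdot n$. The only difference is bookkeeping: the paper integrates by parts after rescaling to self-similar variables (so the pseudovelocity $w-\mu z$ and the self-similar Euler equation \eqref{euler_ss} appear directly), whereas you phrase the identical computation as a space-time Rankine--Hugoniot condition for the moving sheet with normal speed $\mu(z\cdot n)/t$, arriving at the same jump vector $\left(n\cdot(w-\mu z)\right)(w^R-w^L)+(q^R-q^L)n$.
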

We note that in the velocity matching condition \eqref{velocity_matching} we consider $n\cdot v $ with $v$ defined as the limit velocity field from either side of $\Sigma$, which is well-defined by the assumption of the continuity of $v\cdot n$ across $\Sigma$. 

While conditions (2), (3) of Definition~\ref{def_velocity}, as well as the pressure matching condition \eqref{pressure_matching} are well-known in the vortex sheet literature, condition (4) of Definition~\ref{def_velocity} and the velocity matching \eqref{velocity_matching} seem to appear for the first time. We emphasize that  these two conditions are crucial for the spiral vortex sheet to solve the 2D Euler in a weak sense. In fact, they are sharp in the sense which we describe below.

A recent work \cite{bcs} considers the limit $t \to 0^+$ of Kaden's spiral, which was identified in \cite[Proposition~4.5]{COPS}. It is a steady velocity field whose vorticity  is supported on the line $\{(x,0)\in \R^2, x\geq 0\}$ with density of the vorticity given by 
\[
\gamma(x)=(2-1/\mu)x^{2-1/\mu},\;\;\mu\in [2/3,1).       
\]
In the case $\mu=2/3$ it is shown that the velocity field satisfies conditions (1)--(3) of Definition~\ref{def_velocity}, as well as both the velocity matching condition \eqref{velocity_matching} and the pressure matching \eqref{pressure_matching}. However, the decay condition (4) of Definition~\ref{def_velocity} fails, and it is observed that the Euler equations \eqref{weak_euler} fail in a neighbourhood of $0$, see \cite{bcs} for details. 

On the other hand, if $\mu\in (2/3,1)$, then all conditions (1)--(4) of Definition~\ref{def_velocity} hold, and so does the pressure matching \eqref{pressure_matching}. However, the velocity matching \eqref{velocity_matching} fails, and so the Euler equations \eqref{weak_euler} are again not satisfied. 
This shows that both the decay condition (Definition~\ref{def_velocity}(4)) and the velocity matching \eqref{velocity_matching} are relevant.

Secondly we note that the pseudovelocity  $w-\mu z$ plays an essential role in the self-similar flows \eqref{def_of_v}. In fact, writing the vorticity equation \eqref{euler_vort} in the self-similar form gives 
\[
\Omega (z) = (w (z) - \mu z ) \cdot \nabla \Omega (z),
\]
see \cite{bressan_murray}, which shows that $\Omega$ grows exponentially along the characteristic lines of pseudovelocity. The velocity matching condition \eqref{velocity_matching} states that the pseudovelocity has to be tangent to the spiral for the vortex sheet to satisfy the 2D Euler equations \eqref{weak_euler}. 

Moreover, it appears that the two matching conditions \eqref{velocity_matching}--\eqref{pressure_matching} play equally important role, see Theorem~\ref{thm_alex} for details.

Furthermore, we note that the lack of integrability of the Biot-Savart law \eqref{bs_law} is one of the main challenges of mathematical description of spiral vortex sheets. This is particularly clear in the case of logarithmic spiral vortex sheets, which we are concerned with in this paper.

We define logarithmic spirals using the parametrization of vorticity:
\eqnb\label{log_spirals}
\begin{cases}
Z_m(\theta , t) = t^\mu \ee^{a (\theta - \theta_m )} \ee^{i\theta },\\
\Gamma_m (\theta ,t) = g_m t^{2\mu -1 }  \ee^{2a (\theta - \theta_m )}, \\
r_m(\theta ,t)  = | Z_m (\theta ,t)| = t^\mu \ee^{a (\theta - \theta_m )}, \qquad \theta \in \R, \ t>0,
\end{cases}
\eqne
where $\Gamma_m (\theta , t)$ denotes the total circulation at radius $r_m(\theta , t)$, and $Z_m(\theta , t)$ denotes the parametrization of the spiral at time $t$ with respect to the parameter $\theta $, see Fig.~\ref{fig_spiral_intro} for a sketch. Here
\eqnb\label{log_spirals_parameters}
a>0, \quad \mu \in \R , \quad g_m \in \R\setminus\{0\}, \quad \theta_m \in [0,2\pi ),
\eqne
where $m=0,\ldots , M-1$, are parameters and it is assumed (without loss of generality) that
\[
0\leq \theta_0 < \theta_1 < \ldots < \theta_{M-1} ,
\]
for simplicity. In such case we will denote by $\Sigma_m (t)$ the spiral parametrized by $Z_m (\theta , t )$, and we set $\Sigma_m \coloneqq \Sigma_m (1)$, $\Sigma \coloneqq \bigcup_{m=0}^{M-1} \Sigma_m$ and $Z_m (\theta )\coloneqq Z_m(\theta ,1)$ for brevity. For each $m$ we denote by $\Omega_m (t)$ the region between $\Sigma_m (t)$ and $\Sigma_{m+1} (t)$ (with the convention that $\Sigma_{M} \coloneqq  \Sigma_0$), and similarly $\Omega_m \coloneqq \Omega_m (1)$.

\begin{figure}[h]
\centering
 \includegraphics[width=0.3\textwidth]{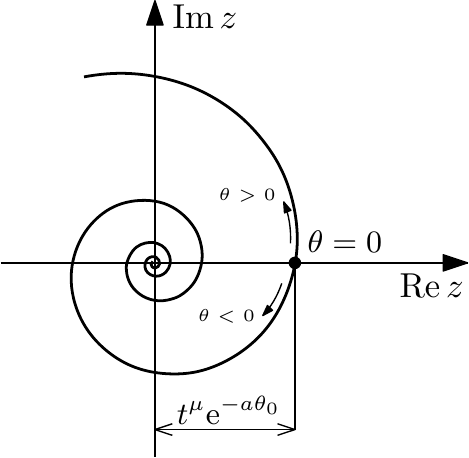}
 \nopagebreak
  \captionof{figure}{The sketch of a single ($M=1$) logarithmic spiral.}\label{fig_spiral_intro} 
\end{figure} 

We will show that one can describe the above vortex sheets using the vortex sheet velocity \eqref{def_of_v} with  the self-similar profile
\begin{equation}\label{def_of_w}
w(z) \coloneqq  \ee^{i\theta } \sum_{k=0}^{M-1} \frac{2ag_{k} }{r(a-i)} \left(  r^{\frac{2a}{a+i} }\ee^{A  (\theta_k -\theta ) } \frac{\ee^{2\pi \jj (r,\theta ,k)A }}{1-\ee^{2\pi A}} \right)^*,
\end{equation}
where $z=r\ee^{i\theta }\in \C \setminus (\Sigma \cup \{ 0 \})$ and
\eqnb\label{def_of_jj}
\jj =\jj (r,\theta ,k ) \coloneqq \min \left\lbrace j\in \Z \colon a(2\pi j + \theta_k - \theta ) + \ln r >0 \right\rbrace
\eqne
denotes the \emph{winding number of the spiral}. Here $z=x+iy=r\ee^{i\theta }\in\C\setminus(\Sigma\cup\{0\})$ denotes the complex representation of a point $(x,y)\in \R^2 \setminus(\Sigma\cup\{0\})$, and $v=v_1+iv_2$ is a complex representation of the velocity field $(v_1,v_2)\in \R^2$. We note that here $\theta \in \R$, and we show (in Proposition~\ref{prop_j1}(i)) that the definition \eqref{def_of_jj} ensures that $w$ is well-defined; namely that \eqref{def_of_w} is invariant with respect to adding $2\pi k$, $k\in \Z$, to $\theta$. The formula \eqref{def_of_w} has a number of remarkable properties which are valid for all values of the parameters \eqref{log_spirals_parameters}, including conditions (1)--(4) of Definition~\ref{def_velocity}, as well as decay properties, which we discuss in detail in Theorem~\ref{thm1} below. 

We first note that applying Theorem~\ref{thm_main} to \eqref{def_of_w} gives the first characterization of the logarithmic spirals \eqref{log_spirals} as solutions to the Euler equations \eqref{weak_euler}. 

 \begin{theorem}[Well-definiteness of the logarithmic spiral vortex sheets]\label{thm_alex}
In the case of $w$ given by \eqref{def_of_w}, the  
velocity matching condition \eqref{velocity_matching} and the pressure matching condition \eqref{pressure_matching} are equivalent to the imaginary part and the real part of 
 \begin{equation}\label{eq-disc2}
\frac{1}{\sinh (\pi A) }\sum_{k=0}^{M-1} \mathcal{A}_{mk}g_{k} = \frac{2i}{A} \left(\mu  + \frac{(1-2\mu)(a+i)}{2a}\right)^{*} = -(a^2+1-2\mu +2a\mu i )/2a^2
\end{equation}
for $m\in \{ 0 , \ldots , M-1 \}$, where
\begin{equation}\label{def_of_A}
A\coloneqq -\frac{2ai}{a+i} = \frac{-2a}{1+a^2} \left( 1+ai \right),
\end{equation}
and
\eqnb\label{def_of_Amk} 
\mathcal{A}_{mk}\coloneqq  \ee^{A(\theta_{k}-\theta_{m})}
\begin{cases}
\ee^{-\pi A} \hspace{2cm}&  k>m ,\\
\cosh(\pi A) & k=m , \\
\ee^{\pi A}  & {k}<{m}
\end{cases}
\eqne
for $k,m\in \{ 0, \ldots , M-1 \}$. 
 \end{theorem}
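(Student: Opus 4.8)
The plan is to reduce everything to the time slice $t=1$ by self-similarity, and then to verify the two matching conditions directly from the explicit formula \eqref{def_of_w} (together with the explicit formulas for the complex potential $\Phi$ and the pressure profile $q$ supplied by Theorem~\ref{thm1}). At $t=1$ the spiral $\Sigma_m$ is parametrized by $Z_m(\theta)=\ee^{a(\theta-\theta_m)}\ee^{i\theta}$, so that along it $\ln r = a(\theta-\theta_m)$ and the tangent and normal directions are $Z_m'(\theta)=(a+i)Z_m(\theta)$ and $i(a+i)Z_m(\theta)$. The velocity matching \eqref{velocity_matching} and pressure matching \eqref{pressure_matching} thus become, respectively, the scalar identity $n\cdot(w-\mu z)=0$ and the vanishing of the jump $[q]=0$ across each $\Sigma_m$, both to be checked for every $\theta\in\R$.

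First I would evaluate $w$ on $\Sigma_m$. The key observation is that as $z$ crosses $\Sigma_m$ only the winding number $\jj(r,\theta,m)$ of the $k=m$ summand jumps, while $\jj(r,\theta,k)$ for $k\neq m$ stays constant; hence $[w]=w^+-w^-$ comes solely from the $k=m$ term and, by Proposition~\ref{prop_j1}(i) and condition~(2) of Definition~\ref{def_velocity}, is tangent to $\Sigma_m$. Substituting $\ln r = a(\theta-\theta_m)$ into \eqref{def_of_jj} shows that on $\Sigma_m$ the winding number equals $0$ for $k>m$ and $1$ for $k<m$, and jumps between the two for $k=m$. Feeding these values into the factor $\ee^{2\pi\jj A}/(1-\ee^{2\pi A})$ collapses it to $-\tfrac{1}{2\sinh(\pi A)}$ times $\ee^{-\pi A}$, $\cosh(\pi A)$, $\ee^{\pi A}$ in the three cases $k>m$, $k=m$ (after averaging the two sides), $k<m$, which is precisely the content of the matrix $\mathcal{A}_{mk}$ in \eqref{def_of_Amk}; the residual factor $\ee^{A(\theta_k-\theta_m)}$ emerges from combining $r^{2a/(a+i)}=\ee^{aiA(\theta-\theta_m)}$ with $\ee^{A(\theta_k-\theta)}$.

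Next I would insert these expressions into the two conditions. For the velocity matching I use that $n\cdot w$ is continuous across $\Sigma_m$, so it suffices to compute the common normal component of the averaged field $\bar w=\tfrac12(w^++w^-)$ and subtract $\mu\, n\cdot z$; writing this in complex form as the real part of a product with $i(a+i)Z_m(\theta)$ and checking that the overall $\theta$-dependent factor cancels, I expect the condition to reduce to the imaginary part of \eqref{eq-disc2}, the diagonal entry $\cosh(\pi A)$ appearing naturally because $\bar w$ uses the average of the two sides on arm $m$. For the pressure matching I expand the Bernoulli jump $[q]=-(2\mu-1)\re[\Phi]+\mu\re(z[w^*])-\tfrac12[|w|^2]$, using $[|w|^2]=2\,\bar w\cdot[w]$ (a consequence of $[w]$ being tangential) and the jump of $\Phi$ obtained from $\Phi'=w^*$; the arms $k\neq m$ enter through $\bar w$ in this last term, so after simplification I expect $[q]=0$ to reduce to the real part of \eqref{eq-disc2}, with the same matrix $\mathcal{A}_{mk}$ reappearing. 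Summing the contributions of all arms at $z\in\Sigma_m$ then produces the linear system \eqref{eq-disc2}, and evaluating the constant terms $\mu\, n\cdot z$ and the $\Phi$-jump against the right-hand side $\tfrac{2i}{A}(\mu+(1-2\mu)(a+i)/2a)^*$ finishes both implications.

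The main obstacle will be the bookkeeping needed to show that the two real scalar conditions align exactly as the imaginary and real parts of one and the same complex equation. This alignment hinges on carefully tracking the factors $(a\pm i)$ and the definition $A=-2ai/(a+i)$ through the conjugations in \eqref{def_of_w}, and on verifying that all $\theta$-dependent prefactors cancel so that each condition becomes genuinely $\theta$-independent, hence a single linear constraint on the $g_k$ per arm. Establishing the jump of the complex potential $\Phi$ across the sheet, and reconciling it with the $\re[\Phi]$ term in the Bernoulli law, is the most delicate computational point; everything else is a careful but routine manipulation of the explicit formulas.
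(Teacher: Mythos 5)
Your proposal follows essentially the same route as the paper: evaluating \eqref{def_of_w} on $\Sigma_m$ via the one-sided winding numbers of Proposition~\ref{prop_j1}(iv), testing the averaged field against the normal direction $i(a+i)Z_m(\theta)$ for the velocity matching, and computing the Bernoulli jump of $q$ (via the jumps of $\Phi$ and $|w|^2$) for the pressure matching, with the common factor $\ee^{2a(\theta-\theta_m)}$ cancelling to leave the $\theta$-independent constraint \eqref{eq-disc2}. Your identity $[|w|^2]=(w^R+w^L)\cdot(w^R-w^L)$ is just a tidier packaging of the double sum the paper expands explicitly, and your appeal to the tangency of the jump should cite Theorem~\ref{thm1}(ii) (where it is proved) rather than Definition~\ref{def_velocity}(2); otherwise the plan is sound.
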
 
 
 We note that \eqref{eq-disc2} implies in particular that the left-hand side must be independent of $m$. 
 Theorem~\ref{thm_alex} says that, in the case of logarithmic spirals, the Euler equations \eqref{weak_euler} reduce to single algebraic equation in $\C^M$, which lets one determine the coefficients \eqref{log_spirals_parameters}. It also shows that, as mentioned above, the velocity matching \eqref{velocity_matching} and pressure matching \eqref{pressure_matching} play, roughly speaking, equally relevant roles as the  imaginary and real parts of \eqref{eq-disc2}. We note that equation \eqref{eq-disc2} has appeared in the work of Elling and Gnann \cite{EL} using the viewpoint of the Birkhoff-Rott equations in which case must be supplemented with certain compatibility conditions (see \eqref{cc-1}). One of the main conclusion of this paper is that the compatibility conditions can be neglected, and we discuss this issue in detail in Section~\ref{sec_EL} below.

The problem of the lack of integrability of the Biot-Savart law \eqref{bs_law} for logarithmic spirals, discussed above, has been attempted in a number of ways. Saffman \cite{saffman}  introduced a way of dealing with this problem that is based on an unpublished note of Moore, see Section~8.3 in his extensive exposition \cite{saffman} on vortex dynamics.  The idea presented in \cite{saffman} is based on an observation that the integral in the Biot-Savart law \eqref{Biot_Sav} would converge if ``$2a$'' appearing in the exponent in the vorticity parametrization \eqref{log_spirals} was replaced by any $\beta \in (0,a)$ (see the discussion following \eqref{spirals_eq} below for a more extensive analysis of this issue). It then suggests that the velocity field for $\beta =2a$ could be obtained by analytic continuation. However, such approach does not guarantee that the Biot-Savart law \eqref{bs_law} continues to hold for the analytic continuation. Moreover, this method also leads to incorrect decay properties as $r\to 0$, which is a subtlety only visible through the explicit velocity formula \eqref{def_of_w}. To be more precise, according to \cite[eq. (9) in Section~8.5]{saffman} the method of analytic continuation predicts that $|w(z)|\leq C |z|^{(a^2-1)/(a^2+1)}$ for each $z\in \C$, while in fact the correct estimate is $|w(z)|\leq C|z|$, see \eqref{growth_of_w} below. This is a problem, as this suggests that the decay condition (4) in Definition~\ref{def_velocity} fails for $a\leq \sqrt{3}/3$, which suggests ill-posedness as in the Kaden example mentioned above. However this is false - the condition (4) holds in fact for all $a>0$! The reason for this discrepancy in the decay rate is because this method is  only concerned with an attempt to extend the the Biot-Savart law \eqref{bs_law} into the non-integrable regime, rather than attempt to explain the issue from using fundamental principles. In particular it neglects some important contributions to the growth from the dynamics on the spirals which is encoded by the winding number $J(r,\theta , k)$ introduced above. We discuss this issue in detail in Section~\ref{sec_prop_of_v}.

We also note that Wu \cite[p.~6]{wu} has proposed a method that makes sense of the Birkhoff-Rott equations \eqref{br_eqs_intro} if the integral on the right-hand side of \eqref{br_eqs_intro} converges for at least one $\theta \in \R$. However, this does not simplify the problem in the case of logarithmic spiral \eqref{log_spirals}, as the integral carries the same difficulty for every $\theta \in \R $ due to the self-similarity of the spiral.\\

We emphasize that Theorem~\ref{thm_alex} provides a unified well-definiteness result of all logarithmic spirals, including  Alexander spirals \cite{alexander}, namely the symmetric choice 
\eqnb\label{def_of_alex}
g_m\coloneqq  g \in \R\setminus \{ 0\}, \quad \theta_m \coloneqq \frac{2\pi m}M.
\eqne
In such case \eqref{eq-disc2} becomes
\eqnb\label{constraint_for_alex}
a^2+1-2\mu + 2a\mu i =-2a^2 g \coth (\pi A/M),
\eqne
and so we obtain the following.
 \begin{corollary}[Well-definiteness of the Alexander spirals]\label{cor_alex}
Alexander's spirals \eqref{def_of_alex} give rise to weak solutions of the Euler equations if and only if $a>0$, $g\in \R\setminus \{ 0 \}$, $\mu \in \R$ satisfy \eqref{constraint_for_alex}.
 \end{corollary}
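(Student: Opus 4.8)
The plan is to deduce the corollary directly from Theorem~\ref{thm_alex} by specializing to the symmetric configuration \eqref{def_of_alex} and evaluating the resulting sum in closed form. By Theorem~\ref{thm_main} together with Theorem~\ref{thm_alex}, the Alexander spiral gives rise to a weak solution of the Euler equations precisely when \eqref{eq-disc2} holds for every $m\in\{0,\ldots,M-1\}$. Substituting $g_k=g$ and $\theta_k=2\pi k/M$, the left-hand side of \eqref{eq-disc2} becomes $\frac{g}{\sinh(\pi A)}S_m$, where $S_m\coloneqq\sum_{k=0}^{M-1}\mathcal{A}_{mk}$, so the entire task reduces to computing $S_m$ and checking that the result is independent of $m$.

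First I would write $\theta_k-\theta_m=2\pi(k-m)/M$ and set $\rho\coloneqq\ee^{2\pi A/M}$, so that the prefactor $\ee^{A(\theta_k-\theta_m)}$ in \eqref{def_of_Amk} equals $\rho^{k-m}$. Splitting the defining sum according to the three cases $k<m$, $k=m$, $k>m$ turns $S_m$ into two finite geometric series together with the diagonal term $\cosh(\pi A)$. Evaluating these series yields
\[
S_m=\ee^{\pi A}\frac{1-\rho^{-m}}{\rho-1}+\cosh(\pi A)+\ee^{-\pi A}\frac{\rho^{M-m}-\rho}{\rho-1}.
\]
The crucial observation is that $\rho^{M}=\ee^{2\pi A}$, whence $\ee^{-\pi A}\rho^{M-m}=\ee^{\pi A}\rho^{-m}$ and the two $\rho^{-m}$ contributions cancel exactly. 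This confirms the remark following Theorem~\ref{thm_alex} that the left-hand side of \eqref{eq-disc2} is independent of $m$, and leaves
\[
S_m=\frac{\ee^{\pi A}-\ee^{-\pi A}\rho}{\rho-1}+\cosh(\pi A).
\]

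It then remains to simplify this expression. Writing $x\coloneqq\pi A$ and $y\coloneqq\pi A/M$, so that $\rho=\ee^{2y}$, I would factor $\ee^{y}$ out of both numerator and denominator to obtain $\frac{\ee^{x}-\ee^{2y-x}}{\ee^{2y}-1}=\frac{\sinh(x-y)}{\sinh y}$, and then combine with $\cosh x$ using the addition formula $\sinh(x-y)=\sinh x\cosh y-\cosh x\sinh y$. The $\cosh x\sinh y$ terms cancel and one is left with $S_m=\sinh x\cosh y/\sinh y=\sinh(\pi A)\coth(\pi A/M)$. Plugging this back into \eqref{eq-disc2} and clearing the factor $\sinh(\pi A)$ yields exactly the scalar constraint \eqref{constraint_for_alex}, which proves the corollary.

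The computation is elementary once the bookkeeping is set up correctly; the only mild subtlety, and the step I would be most careful about, is the cancellation of the $m$-dependence, which relies specifically on the periodicity relation $\rho^{M}=\ee^{2\pi A}$ together with the precise way the $\ee^{\pm\pi A}$ weights are attached to the off-diagonal entries of $\mathcal{A}_{mk}$.
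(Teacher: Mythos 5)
Your proposal is correct and follows essentially the same route as the paper: the paper also deduces the corollary by specializing \eqref{eq-disc2} to the symmetric data \eqref{def_of_alex} and verifying in its Appendix, via the same split into $k<m$, $k=m$, $k>m$ and the same geometric-series evaluation, that $\sum_{k}\mathcal{A}_{mk}=\sinh(\pi A)\coth(\pi A/M)$ independently of $m$. Your cancellation of the $m$-dependence using $\rho^{M}=\ee^{2\pi A}$ and the subsequent simplification to $\coth(\pi A/M)$ match the paper's computation, so there is nothing to add.
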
 

We note that there exist infinitely many triples $(a,g,\mu)$ satisfying \eqref{constraint_for_alex}. For example, first choosing any $a>0$ such that the direction of the term on the right-hand side of \eqref{constraint_for_alex} is different from the direction of $-1+ai$, we see that there exists a unique pair of $\mu \in \R$ and $g \in \R \setminus \{ 0\}$ for which \eqref{constraint_for_alex} holds. In particular, such choice of $a$ can be achieved simply by taking it sufficiently large. Indeed, noting that $A= -2i - 2/(a+i)$ we have $\ee^{\pi A } = \ee^{-2\pi/(a+i)}$, and so
\[
\coth (\pi A) = \frac{\ee^{-2\pi/(a+i)}+\ee^{2\pi/(a+i)}}{\ee^{-2\pi/(a+i)}-\ee^{2\pi/(a+i)}} = \frac{2+O\left( \frac{1}{a^2} \right) }{\frac{-4\pi}{a+i } + O\left( \frac{1}{a^2} \right) } = \frac{-a}{2\pi } + O(1)
\]
as $a\to \infty$, which shows that $\coth (\pi A)$ and $(-1+ai)$ have different directions for sufficiently large $a$. Thus the unique choice of $\mu, g $ follows by expressing $1+a^2$ as a linear combination of the two vectors. Note that $g\ne 0$ (as required), as $1+a^2$ and $-1+ai$ are linearly independent.\\

We also note that the particular case of Alexander spirals with $M=1$ is often referred to as the \emph{Prandtl spiral} \cite{prandtl22}, and  Corollary~\ref{cor_alex} provides the first rigorous mathematical framework of such spiral, an issue that has remained open since its introduction in 1922 in the pioneering work of Prandtl \cite{prandtl22}. We also note that the  cases $M=1$ and $M=2$ in Corollary~\ref{cor_alex} are particularly interesting, as in these cases some recent evidence suggests ill-posedness of the spirals, see Section~\ref{sec_EL} for details. \\

Furthermore, the well-posedness of the Alexander spirals in Corollary~\ref{cor_alex} can be used to prove sharpness of Delort's theorem on global-in-time well-posedness of the Euler equations with $\omega_0\in H^{-1} (\R^2)$. 

\begin{theorem}[Th\'eor\`eme~1.1.1 in Delort~\cite{delort}]\label{thm_delort}
Let $\omega_0\in H^{-1} (\R^2)$ be a compactly supported, nonnegative  Radon measure on $\R^2$, and let $v_0\coloneqq K\ast \omega_0$. Then there exists   $v\in L^\infty_{loc} (\R ; L^2_{loc} (\R^2 ; \R^2))$ and   $p\in L^\infty_{loc} (\R; \mathcal{S}'(\R^2 ))$ satisfying the Euler equations \eqref{euler_intro} with $v(0)=v_0$.
\end{theorem}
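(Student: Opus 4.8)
The statement is the celebrated theorem of Delort, and the plan is to reproduce the now-classical vanishing-regularization scheme, obtaining $v$ as a weak-$L^2_\loc$ limit of smooth solutions and exploiting the \emph{sign} of the vorticity to pass to the limit in the quadratic nonlinearity of \eqref{weak_euler}. First I would regularize the data: with $\rho_\ve$ a standard mollifier, set $\omega_0^\ve \coloneqq \rho_\ve \ast \omega_0 \ge 0$, which is smooth, supported in a fixed compact set, and converges weakly-$*$ as measures to $\omega_0$ with $\int \omega_0^\ve = \int \d\omega_0$ fixed. Then $v_0^\ve \coloneqq K\ast \omega_0^\ve = \rho_\ve \ast (K\ast\omega_0)$ is smooth, and classical theory provides a unique global smooth solution $v^\ve$ of \eqref{euler_intro}, whose vorticity $\omega^\ve$ is transported by the volume-preserving flow of $v^\ve$ according to \eqref{euler_vort}. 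In particular $\omega^\ve(t)\ge 0$ is preserved, and its mass $\int \omega^\ve(t) = \int\omega_0^\ve$ is conserved in time and bounded uniformly in $\ve$.

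Next I would extract uniform a priori bounds. From the transport structure, $\omega^\ve$ is bounded in $L^\infty_\loc(\R;\mathcal{M}(\R^2))$ (conserved mass and sign). The inhomogeneous $H^{-1}$ hypothesis on $\omega_0$ controls the high-frequency (local) regularity of $v_0=K\ast\omega_0$ via \eqref{bs_law}, while the far field is the explicit, locally square-integrable field $\tfrac{m}{2\pi}x^\perp/|x|^2$ with $m=\int\d\omega_0$; together with the compact support and finite mass this yields a uniform bound on $v^\ve$ in $L^\infty_\loc(\R; L^2_\loc(\R^2))$ (note the global energy is infinite when $m>0$, which is why only the local bound survives). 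For compactness in time I would use the equation in the form $\p_t v^\ve = -\sum_j \p_j(v^\ve_i v^\ve_j) - \p_i p^\ve$; tested against divergence-free fields the pressure drops out and the quadratic term is bounded in $L^1_\loc$, so $\p_t v^\ve$ is bounded in a negative-regularity space. By an Aubin--Lions argument this gives, along a subsequence, $v^\ve \rightharpoonup v$ weakly in $L^2_\loc(\R^2\times\R)$ and $\omega^\ve \rightharpoonup \omega = \mathrm{curl}\, v \ge 0$ weakly-$*$.

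The main obstacle, and the entire content of the theorem, is passing to the limit in the products $v_i^\ve v_j^\ve$ of \eqref{weak_euler}, since weak $L^2_\loc$ convergence does not commute with products. Writing $v_i^\ve v_j^\ve \rightharpoonup v_iv_j + \nu_{ij}$ as measures, where the symmetric matrix $(\nu_{ij})$ is the defect measure created by possible concentration of kinetic energy, the weak formulation against a divergence-free $\varphi$ survives the limit precisely when $\sum_{1\le i,j\le 2}\nu_{ij}\p_i\varphi_j = 0$. This holds whenever the defect is isotropic, $\nu_{ij}=\lambda\,\delta_{ij}$ for a scalar measure $\lambda$, because then
\[
\sum_{1\le i,j\le 2} \nu_{ij}\,\p_i\varphi_j = \lambda\,\div\varphi = 0 .
\]
The heart of Delort's argument is to establish exactly this isotropy from the sign condition: the trace-free and off-diagonal parts of the defect, which are naturally encoded through the holomorphic-type combination $(v_1^\ve - i v_2^\ve)^2$, are controlled by a compensated-compactness (concentration--cancellation) estimate that fails for signed vorticity but holds when $\omega^\ve\ge 0$, since nonnegativity rules out the anisotropic concentration of energy (e.g. along curves) that would otherwise spoil the limit. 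I expect this step to be by far the most delicate, requiring the sharp logarithmic control of $K\ast\omega^\ve$ on small balls afforded by the uniform mass bound and the sign.

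Finally, having passed to the limit, $v\in L^\infty_\loc(\R;L^2_\loc(\R^2;\R^2))$ is weakly divergence-free and satisfies \eqref{weak_euler} with $v(0)=v_0$. The pressure is then recovered by taking the divergence of the momentum equation, which gives $-\Delta p = \sum_{1\le i,j\le 2}\p_i\p_j(v_iv_j)$; since the right-hand side is a second derivative of an $L^1_\loc$-in-time, $L^1_\loc$-in-space quantity, this determines $p\in L^\infty_\loc(\R;\mathcal{S}'(\R^2))$ up to the usual harmonic ambiguity, completing the construction.
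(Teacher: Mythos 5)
The paper does not actually prove this statement: it is quoted verbatim as Th\'eor\`eme~1.1.1 of Delort's 1991 paper, purely as background for the sharpness result of Corollary~\ref{cor_sharpness}, so there is no internal proof to compare against. Judged on its own terms, your outline is a faithful roadmap of Delort's genuine strategy (in its later DiPerna--Majda/concentration--cancellation formulation): mollify the data, solve globally and smoothly, propagate the sign and mass of the vorticity, extract a weak limit from a uniform $L^\infty_{\loc}(L^2_{\loc})$ bound plus an Aubin--Lions argument, and dispose of the quadratic term by showing that any kinetic-energy defect measure is isotropic and hence annihilated by divergence-free test functions.

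The genuine gap is that the decisive step --- proving that nonnegative vorticity forces $\nu_{ij}=\lambda\,\delta_{ij}$ --- is described but not carried out, and, as you yourself concede, this single step \emph{is} the theorem. Concretely, what is missing is the quantitative no-concentration estimate $\omega^\ve_t(B(x,r))\le C\,|\log r|^{-1/2}$, uniform in $\ve$, $t$ and $x$, which follows from the sign condition together with the conserved moment of inertia and the local energy bound; it is this estimate that kills the diagonal contribution in the symmetrized weak vorticity formulation (the kernel $H_\phi$ of Section~\ref{sec_history}) and rules out anisotropic concentration along curves. Two subsidiary points also need actual proofs rather than assertions: the uniform bound on $v^\ve$ in $L^\infty_{\loc}(\R;L^2_{\loc})$ must be \emph{propagated} in time (it does not follow from conservation of mass and sign alone; one decomposes $v^\ve$ into the explicit far-field part $\tfrac{m}{2\pi}x^\perp/|x|^2$ plus a remainder controlled via a localized energy estimate), and the attainment of the initial datum $v(0)=v_0$ requires the time-equicontinuity of $t\mapsto v^\ve(t)$ in a negative norm, which your compactness step supplies but which should be made explicit. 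As it stands the proposal is an accurate summary of how the theorem is proved, not a proof of it.
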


In fact, the Prandtl spiral can be used to show that, if the assumption on compact support is relaxed to $\sigma$-finite measures and $H^{-1}$ is replaced by $H^{-1}_{loc}$, the above theorem either fails or produces nonunique solutions.

\begin{corollary}[Sharpness of Delort's Theorem~\ref{thm_delort}]\label{cor_sharpness}
There exists a nonnegative $\sigma$-finite Radon measure $\omega_0\in H^{-1}_{loc} (\R^2)$ and a solution $v$ with $v(0)$ such that $\mathrm{curl}\,v(0)=\omega_0$ in the sense of distributions and  $\int_{B(0,1)} |v(t)|^2 \to \infty$ as $t\to t_0$, for some $t_0>0$. 
\end{corollary}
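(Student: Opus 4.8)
The plan is to construct an explicit example using the Prandtl spiral (the $M=1$ Alexander spiral) and exploit the self-similar scaling to produce a solution whose local kinetic energy blows up in finite time. First I would fix parameters $a>0$, $g\in\R\setminus\{0\}$, $\mu\in\R$ satisfying the constraint \eqref{constraint_for_alex} with $M=1$, which by Corollary~\ref{cor_alex} guarantees that the associated vortex sheet velocity field $v$ is a genuine weak solution of the Euler equations \eqref{weak_euler}. Crucially, I would choose $\mu>1$: since the right-hand side of \eqref{constraint_for_alex} is $-2a^2 g\coth(\pi A)$ and $g$ is free to range over $\R\setminus\{0\}$, the argument following Corollary~\ref{cor_alex} shows there is genuine freedom in selecting $(a,\mu)$, and in particular one can arrange $\mu>1$ (indeed for large $a$ the pair $(\mu,g)$ is uniquely determined by writing $1+a^2$ as a real combination of $-1+ai$ and $\coth(\pi A)$, and one checks the resulting $\mu$ exceeds $1$ in a suitable regime). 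The significance of $\mu>1$ is that the blow-up will be arranged to occur at a finite time rather than at $t=0$.

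Next I would identify the initial data. Setting $t=1$ as a reference, the profile $w$ given by \eqref{def_of_w} has vorticity $\Omega=\mathrm{curl}\,w$ supported on the logarithmic spiral $\Sigma$, with circulation density growing like $\ee^{2a\theta}$; this measure is $\sigma$-finite but \emph{not} finite, since the spiral accumulates at $0$ with unbounded total circulation, and it is not compactly supported since the spiral extends to infinity. I would verify that $\omega_0\coloneqq\mathrm{curl}\,v(0)$ (appropriately interpreted via the self-similar formulas $\omega(z,t)=t^{-1}\Omega(z/t^\mu)$) defines a nonnegative $\sigma$-finite Radon measure lying in $H^{-1}_{loc}(\R^2)$ but not in $H^{-1}(\R^2)$; the membership in $H^{-1}_{loc}$ follows from the fact that $v\in L^2_{loc}$ (condition in Definition~\ref{def_velocity} together with the decay estimate $|w(z)|\le C|z|$ recorded as \eqref{growth_of_w}), so that $\omega_0=\mathrm{curl}\,v_0$ with $v_0\in L^2_{loc}$ automatically lies in $H^{-1}_{loc}$. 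Nonnegativity of the vorticity measure is guaranteed by choosing the sign of $g$ so that the circulation $\Gamma_m$ is monotone in the correct direction.

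Finally I would establish the energy blow-up. From the self-similar form \eqref{def_of_v}, $v(z,t)=t^{\mu-1}w(z/t^\mu)$, a direct change of variables gives
\eqnb\label{energy_scaling_sharpness}
\int_{B(0,1)}|v(z,t)|^2\,\d z = t^{2(\mu-1)}\int_{B(0,1)}\left| w\!\left(\tfrac{z}{t^\mu}\right)\right|^2\d z = t^{2(\mu-1)+2\mu}\int_{B(0,t^{-\mu})}|w(\zeta)|^2\,\d \zeta .
\eqne
The growth estimate $|w(\zeta)|\le C|\zeta|$ for large $|\zeta|$ ensures $\int_{B(0,R)}|w|^2\sim R^4$ as $R\to\infty$, so the right-hand side of \eqref{energy_scaling_sharpness} behaves like $t^{4\mu-2}\cdot t^{-4\mu}=t^{-2}$ as $t\to 0^+$; reinterpreting the time direction (or translating so that the singular time is some $t_0>0$ by the time-translation invariance of the autonomous Euler system, replacing $t$ by $t_0-t$) yields $\int_{B(0,1)}|v(t)|^2\to\infty$ as $t\to t_0$. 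The main obstacle I anticipate is twofold: first, making the $L^2_{loc}$-in-space, $L^\infty_{loc}$-in-time membership fully rigorous near the accumulation point $0$, where the winding number $J$ controls delicate cancellations and the naive decay rate is misleading (as the paper stresses in its critique of Saffman's analytic-continuation heuristic); and second, confirming that the self-similar solution genuinely realizes the Delort framework, i.e. that its initial vorticity is the claimed $\sigma$-finite measure and that relaxing compact support to $\sigma$-finiteness is precisely what breaks either existence or uniqueness. Handling the near-origin asymptotics carefully, using \eqref{growth_of_w} rather than the spurious exponent, is where the real work lies.
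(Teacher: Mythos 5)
Your overall strategy coincides with the paper's: take the Prandtl spiral ($M=1$) with parameters satisfying \eqref{constraint_for_alex}, use the self-similar scaling \eqref{def_of_v} to show that the kinetic energy on $B(0,1)$ behaves like $t^{-2}$ as $t\to 0^+$, and reverse time so that the singularity sits at a positive time $t_0$. Two points need attention. First, the insistence on $\mu>1$ is both unnecessary and unjustified: as your own computation shows, the exponent $2(\mu-1)+2\mu-4\mu=-2$ is independent of $\mu$, so the rate $t^{-2}$ holds for every admissible choice of parameters, and the singular time is placed at $t_0$ purely by the reversal $u(t,x)\coloneqq v(t_0-t,-x)$ (note that for Euler the time reversal must be accompanied by the spatial reflection, or equivalently a sign change of the velocity, which you omit). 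You should not leave ``one can arrange $\mu>1$'' as an unchecked side condition when it plays no role in the argument.

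Second, and more seriously, the step $\int_{B(0,R)}|w|^2\sim R^4$ does \emph{not} follow from the pointwise estimate $|w(\zeta)|\le C|\zeta|$ of \eqref{growth_of_w}: an upper bound on $|w|$ yields only $\int_{B(0,R)}|w|^2\le CR^4$, whereas the blow-up conclusion requires the matching lower bound. This lower bound is the crux of the corollary, and it is exactly what the paper supplies: using the exact homogeneity $w(z\ee^{(a+i)\alpha})=\ee^{(a+i)\alpha}w(z)$ of \eqref{po_spiralach} together with spiral coordinates, one obtains the identity \eqref{na_spirali}, $\int_{B(0,r)}|w|^2=C(a,g,\mu)\,r^4$, where the constant is a (strictly positive, since $g\ne 0$) integral of $|w|^2$ over one fundamental turn of the spiral. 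Without this identity, or some other pointwise or averaged lower bound on $w$ at large scales, your argument shows only that the energy \emph{might} blow up. As a smaller correction, the total circulation is unbounded because of the behaviour of the spiral at large scales ($\omega(B(0,r))\sim r^2$ by \eqref{vort_growth_prandtl}), not because of accumulation at the origin, where the circulation density $\ee^{2a\theta}$ in fact tends to zero; this is also the reason the hypothesis of compact support in Delort's theorem is the one being violated.
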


\begin{proof}
See Section~\ref{sec_sharpness}.
\end{proof}

We emphasize that, until recently, the Alexander spirals \eqref{def_of_alex} were the only known example of logarithmic spiral vortex sheets. However, Theorem~\ref{thm_alex} gives a characterization \eqref{eq-disc2} of all logarithmic spirals. We note that a recent result \cite{cko1} of the authors constructs a generic family of nonsymmetric spirals with angles $\theta_m$'s approximately equal to halves of the Alexander angles \eqref{def_of_alex}, which gives another application of Theorem~\ref{thm_alex}. However, existence of other spirals exhibit some lack of symmetry, is an important open problem. In this context, we also refer to  \cite[Section~5]{EL}, who have suggested existence of some nontrivial family of such spirals. 

We note that the issue of existence of global-in-time solutions of the Euler equations in the form of vortex sheets with no distinguished sign remains an important open problem.  So far, besides some trivial stationary solutions (e.g.~piecewise constant shear flows), very few such objects are known, which remains an interesting problem with some contributions by \cite{Lopes_x2_Zhouping} in a  mirror-symmetric setting and by \cite[Section~2.2]{elgindi_jeong}. In this context Theorem~\ref{thm_alex} simplifies the problem into a finite dimensional equation \eqref{eq-disc2} in the logarithmic spiral case. We also note that a recent result of Mengual and Sz\'ekelyhidi Jr.~\cite{mengual} demonstrates a construction of solutions with vortex sheet initial data, using the method of   convex integration.

\subsection{Historical background}\label{sec_history}

Logarithmic spirals \eqref{log_spirals} were first introduced in the case $M=1$ in 1922 by Ludwig Prandtl \cite{prandtl22} as a model of vortices detaching from tips of wings, and were used to study the influence of the vortices on the lift of the wings as well as the influence of the vortices behind a given plane on a plane following it; see Fig.~4 in~\cite{prandtl05} and Fig.~2 and 4 in~\cite{prandtl22}. We also refer the reader to the early works of Helmholtz \cite{helmholtz_87,helmholtz_68}, which include some of the first remarks on vortex sheets. Prandtl's contributions to the study of the spirals, long time before an introduction of the notion of weak solutions of the Euler equations or the Birkhoff-Rott \cite{birkhoff,rott} equations, were based on studying putative discontinuities of the velocity field, dimensional analysis as well as on physical experiments.

 In fact, Prandtl himself has commented (on p.~704 in \cite{prandtl22}) that his exposition has ``touched on many unfinished things'' and that its purpose was to ``give suggestions for further research, rather than to report final results''. 

The multi-branched spirals  \eqref{log_spirals} were introduced by Alexander \cite{alexander}, who observed that the velocity flow generated by the spirals is divergence-free and potential, which made it possible to use the method of a complex potential. The question of mathematical well-posedness, however, remained open. 

The introduction of Birkhoff-Rott \cite{birkhoff,rott}  equation,
\eqnb\label{br_eqs_intro}
\partial_{t}Z(\Gamma  ,t) = \left(\frac{1}{2\pi i}\mathrm{p.v.}\int \frac{\d \Gamma'}{Z(\Gamma  ,t)-Z(\Gamma',t)}\right)^{*},
\eqne
promised a more rigorous approach to Prandtl's spirals. Here $Z(\Gamma ,t)\in \C$ denotes the position of a point on the curve that is the support of vorticity, $\Gamma $ is the cumulative vorticity. 

The Birkhoff-Rott equations are based on a generalization of the Biot-Savart law \eqref{bs_law} to the case of vorticities $\omega_{t}$ in the form of measures, that is
\eqnb\label{Biot_Sav}
v(x,t) = \frac{1}{2\pi}\int_{\R^{2}}\frac{(x-y)^{\bot}}{|x-y|^{2}}\,\d \omega_{t}(y).
\eqne
 Note that we do not apply the principal value in the integral in \eqref{Biot_Sav} as it suffices to define $v$ only outside of $\Sigma (t)$ (recall the weak formulation \eqref{weak_euler} of the Euler equations), while $\omega$ is supported on $\Sigma (t)$. As for the definition of $v$ on $\Sigma (t)$ one could employ the principal value (as in \eqref{br_eqs_intro}) or, equivalently, apply the common convention of considering the arithmetic mean of the limit velocities from both sides of the sheet (see \cite[(2.15)--(2.16)]{Lopes}). We will show below (in Remark~\ref{rem_average}) that this assumption arises naturally in our approach to vortex sheets.  

  Thus the Birkhoff-Rott equation \eqref{br_eqs_intro} describes  the evolution of the support of the sheet by using only its parametrization  $Z(\theta  ,t)$, and the vorticity distribution can be described by 
\[
\omega_{t} = \gamma (t) \delta_{\Sigma (t)},
\]
where $\Sigma (t) \coloneqq \{ Z(\theta  ,t ) \colon \theta  \in \R \}$ is the interface curve, and $\gamma (Z(\theta,t),t) $ is the difference of the tangential parts of limit velocities across $Z(t)$.

 Unfortunately, the right-hand side of the Birkhoff-Rott equation \eqref{br_eqs_intro} does not converge at large scales in the case of logarithmic spirals \eqref{log_spirals} (see \cite{kambe} for instance), and thus any analysis of the Prandtl spiral remained out of reach, including the theory of vortex sheets as weak solutions of Euler equations, developed by Di Perna and Majda \cite{DiPM}. In fact, the divergence of the right-hand side of the Birkhoff-Rott equation \eqref{br_eqs_intro} is connected to the problem of defining the velocity field $v$ via the Biot-Savart law \eqref{Biot_Sav}, due to large contributions of vorticity from the large scales. To be more precise note that expressing $\Gamma $ in \eqref{log_spirals} in terms of $r$ (by substituting $\ee^{a(\theta - \theta_0 )}$) gives that
\eqnb\label{vort_growth_prandtl}
\omega(B(0,r))=gt^{-1} r^2.
\eqne
Thus, if we fix $t>0$ and we ``cut'' the spiral at some distance from the origin then, not only the velocity field is well-defined, but also $\omega\in H^{-1}_{\loc}$ (see \cite{CS}), which, due to compact support, implies that $v\in L_{\loc}^2$, due to a result of Schochet \cite{schochet}. Without cutting the spiral at large scales it has recently been proved in \cite[Theorem 2.1]{COPS} that the Biot-Savart law \eqref{Biot_Sav} defines an $L^1_{\loc}$ velocity field $v$ if the vorticity $\omega$ is a nonnegative $\sigma$-finite measure on $\R^2$ with
\eqnb\label{cops_restriction}
\int_{\R^2} \frac{\d \omega (x)}{1+|x|} <\infty,
\eqne
which does not hold for Prandtl spirals due to \eqref{vort_growth_prandtl} above.

Another approach of understanding vortex sheets is based on the weak vorticity formulation of the 2D Euler equations, i.e.,
\[
\int_0^\infty \int_{\R^2} \left(  \p_t \phi (x,t)  + \int_{\R^2} H_\phi (x,y,t) \d \omega_t (y)  \right) \d \omega_t (x) \,\d t + \int_{\R^2 } \phi(x,0) \d \omega_0 (x) =0
\]
for all $\phi \in C_0^\infty (\R^2 \times (0,\infty ))$, where $\omega_0$ denotes the initial vorticity and
\[
H_{\phi } (x,y,t) \coloneqq \frac{\na \phi (x,t)- \na \phi (y,t)}{2} \cdot K(x,y)
\]
and $K$ is the Biot-Savart kernel \eqref{bs_law}.

 Note that this formulation does not involve the velocity function itself, but rather is concerned with the vorticity $\omega $ only. This formulation has been introduced by \cite{schochet2} (and appears implicitly already in \cite{delort}) in order to deal with the lack of continuity of $v$ across a sheet, and \cite{Lopes} showed equivalence of the weak vorticity formulation with the the Birkhoff-Rott equation \eqref{br_eqs_intro}. However, the relation with the weak formulation \eqref{weak_euler} is still unclear. An equivalence between the two formulations was established by Delort \cite{delort} for vorticities that remain bounded in time with values in the space of bounded Radon measures and with values in $H^{-1}_{\loc}$ and such that the initial vorticity $\omega_0$ has compact support, recall  Theorem~\ref{thm_delort} and Corollary~\ref{cor_sharpness} above. \\
 
One more approach to understanding the logarithmic spirals, is to replace the single Prandtl spiral with multiple coexisting logarithmic spirals that are obtained by rotating a single spiral around the origin, namely expect that the case $M>2$ in \eqref{log_spirals} gives some cancellation of contributions from large scales which makes this case easier to analyse than the case $M=1$. 
This has been explored by  Elling and Gnann \cite{EL} in the context of Birkhoff-Rott equations and offers a remarkable observation that captures such cancellations.

\subsection{The work of Elling and Gnann}\label{sec_EL}
In their remarkable work, Elling and Gnann \cite{EL} applied the ansatz \eqref{log_spirals} in \eqref{br_eqs_intro} to obtain  
\eqnb\label{spirals_eq}
\mu  + \frac{(1-2\mu)(a+i)}{2a}=
\left(\frac{1}{2\pi i}\mathrm{p.v.}\int_{-\infty}^{\infty} \sum_{k=0}^{M-1}\frac{2ag_k \ee^{2a \sigma }\,\d\sigma  }{1-\ee^{(a+i) \sigma + i (\theta_k-\theta_m )} }\right)^{*}
\eqne
where $\sigma\coloneqq \theta' - \theta +\theta_m-\theta_k$. It is clear from \eqref{spirals_eq} that the right-hand side, while integrable for $\sigma \to- \infty$ for each pair $k,m\in \{ 0, \ldots , M-1\}$, is not integrable as $\sigma \to \infty$, as expected from the above discussion of the Prandtl spiral. However, they observed the algebraic identity
\eqnb\label{intro_alg_id}
\frac{\ee^{2a\sigma}}{1-\ee^{(a+i)\sigma+i\Delta }} = \frac{\ee^{-2i\sigma-2i\Delta}}{1-\ee^{(a+i)\sigma+i\Delta}} - \ee^{-2i\sigma-2i\Delta} - \ee^{(a-i)\sigma-i\Delta},
\eqne
for  $\sigma , \Delta \in \R$. By taking $\Delta \coloneqq \theta_k - \theta_m$ this gives that   \eqnb\label{same_integrand_intro}
\sum_{k=0}^{M-1}
\frac{g_k \ee^{2a\sigma}}{1-\ee^{(a+i)\sigma+i(\theta_k - \theta_m )}} = \sum_{k=0}^{M-1} \frac{g_k \ee^{-2i\sigma-2i(\theta_k - \theta_m )}}{1-\ee^{(a+i)\sigma+i(\theta_k - \theta_m )}} \qquad \text{ for all }\sigma \in \R
\eqne
provided the compatibility conditions
\begin{align}\label{cc-1}
\sum_{k=0}^{M-1} g_{k}\ee^{-i\theta_{k}} = 0,\qquad \sum_{k=0}^{M-1} g_{k}\ee^{-2i\theta_{k}} = 0
\end{align}
hold. Remarkably, assuming the compatibility conditions we can apply \eqref{same_integrand_intro} to substitute the integrand in \eqref{spirals_eq} for sufficiently large $\sigma>0$ and hence obtain the missing integrability at $\sigma\to\infty$. This illustrates the expected cancellation of multiple logarithmic spirals mentioned above. Under the compatibility conditions \eqref{cc-1}, the equation \eqref{spirals_eq} is equivalent to \eqref{eq-disc2},
\[ \frac{1}{\sinh (\pi A) }\sum_{k=0}^{M-1} \mathcal{A}_{mk}g_{k} = \frac{2i}{A} \left(\mu  + \frac{(1-2\mu)(a+i)}{2a}\right)^{*} = -(a^2+1-2\mu +2a\mu i )/2a^2 . \]

Thus the problem of finding logarithmic spirals satisfying the Birkhoff-Rott equations reduces to finding the parameters \eqref{log_spirals_parameters} that satisfy the compatibility conditions \eqref{cc-1} and the constraint \eqref{eq-disc2}. We emphasize that the main difference between this and Theorem~\ref{thm_alex} is that the latter does not require compatibility conditions \eqref{cc-1}.

This is a big improvement, which can already be seen in the case of Alexander spirals \eqref{def_of_alex}, in which case the compatibility conditions \eqref{cc-1} hold only for $M\geq 3$. In fact, in such case $z_k \coloneqq \ee^{-i\theta_k}$, $k=0,\ldots , M-1$, are roots of the equation $z^M-1=0$, and \eqref{cc-1} are simply the first two  Vi\'ete formulas of this polynomial equation. 

On the other hand, the compatibility conditions \eqref{cc-1} fail for $M=2$  for the same reason since $\ee^{-i\theta_0}$ and $\ee^{-i \theta_1}$ are solutions to the quadratic equation $z^2-1=0$, and so Vi\'ete formulas give
\eqnb\label{contradiction_for_M2}
\ee^{-2i\theta_{0}}+\ee^{-2i\theta_{1}}= \left( \ee^{-i\theta_{0}}+\ee^{-i\theta_{1}}\right)^2 -2\ee^{-i\theta_0}\ee^{-i\theta_1} =2, 
\eqne
which contradicts \eqref{cc-1}. The case $M=1$ is similar.\\

Thus, although the Birkhoff-Rott approach, developed by Elling and Gnann \cite{EL}, gives satisfactory description of Alexander spirals for $M\geq 3$, it also gives the misleading suggestion that both the Prandtl spiral ($M=1$), as well as the Alexander spirals with $M=2$, cannot be given a rigorous mathematical meaning. This demonstrates that the approach of the Euler equations \eqref{weak_euler}, provided by Theorem~\ref{thm_alex}, should be used. \vspace{0.3cm}\\

\subsection{Birkhoff-Rott and Euler equivalence}\label{sec_intro_br_vs_euler}
On the other hand, if the compatibility conditions \eqref{cc-1} do hold, we also show that a solution to the Birkhoff-Rott equation corresponds to the same solution, but in terms of the velocity field, to the Euler equations. 

\begin{theorem}\label{thm4}
Suppose that the compatibility conditions \eqref{cc-1} hold. Then, outside the arms of the spirals $\Sigma(t)$ the velocity field $v(t)$ can be obtained by the integral Biot-Savart law \eqref{bs_law} applied to measure \eqref{log_spirals}, that is, 
\begin{align*}
v(z,t) =  \left(\frac{1}{2\pi i}\int_{-\infty}^{\infty}\sum_{k=0}^{M-1}\frac{2at^{2\mu-1}g_{k}\ee^{2a \sigma }\,d\sigma }{z-t^{\mu}\ee^{(a+i) \sigma +i \theta_k }}\right)^{*}
\end{align*}
for $z\not\in\Sigma(t)\cup \{ 0\}$, $t>0$. 
\end{theorem}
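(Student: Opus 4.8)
The plan is to recognize the right-hand side as the complex Biot--Savart law \eqref{Biot_Sav} applied to the measure \eqref{log_spirals}, then to reduce to $t=1$ and evaluate the resulting integral explicitly, matching it against the profile \eqref{def_of_w}. Along the $k$-th arm $Z_k(\theta',t)=t^\mu\ee^{a(\theta'-\theta_k)}\ee^{i\theta'}$ the circulation is $\Gamma_k(\theta',t)=g_k t^{2\mu-1}\ee^{2a(\theta'-\theta_k)}$, so $\d\Gamma'=2ag_k t^{2\mu-1}\ee^{2a(\theta'-\theta_k)}\d\theta'$, and the substitution $\sigma\coloneqq\theta'-\theta_k$ turns $\frac{1}{2\pi i}\int \d\Gamma'/(z-Z')$ into exactly the stated integral. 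Factoring $t^\mu$ out of each denominator shows the whole expression equals $t^{\mu-1}$ times the $t=1$ integral evaluated at $z t^{-\mu}$, matching the ansatz \eqref{def_of_v}; hence it suffices to prove, at $t=1$, that $w(z)$ in \eqref{def_of_w} equals the conjugate of $I(z)\coloneqq \frac{1}{2\pi i}\int_{-\infty}^{\infty}\sum_{k}\frac{2ag_k\ee^{2a\sigma}}{z-\ee^{(a+i)\sigma+i\theta_k}}\,\d\sigma$.

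The first substantive step is to show that, under the compatibility conditions \eqref{cc-1}, the summed integrand is absolutely integrable, so that $I(z)$ is a genuine (non-principal-value) integral for $z\notin\Sigma\cup\{0\}$. Integrability as $\sigma\to-\infty$ is automatic since each term is $O(\ee^{2a\sigma})$ with $a>0$. As $\sigma\to+\infty$ I would expand $\frac{1}{z-\ee^{(a+i)\sigma+i\theta_k}}=-\sum_{n\ge 0}z^n\ee^{-(n+1)((a+i)\sigma+i\theta_k)}$, valid once $\ee^{a\sigma}>|z|$; multiplying by $2ag_k\ee^{2a\sigma}$ and summing over $k$, the resulting term has magnitude $\ee^{(1-n)a\sigma}$ and coefficient proportional to $\sum_k g_k\ee^{-i(n+1)\theta_k}$. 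The term $n=0$ grows like $\ee^{a\sigma}$ and the term $n=1$ is bounded, and both are annihilated precisely by the two identities in \eqref{cc-1}, leaving a tail of size $O(\ee^{-a\sigma})$. This is the same cancellation recorded in \eqref{intro_alg_id}--\eqref{same_integrand_intro}, and it is exactly what renders the large-scale contribution integrable.

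Granting convergence, I would evaluate $I(z)$ by residues in the complex $\sigma$-plane. For fixed $z=r\ee^{i\theta}$ the $k$-th summand has simple poles at $\sigma^{(k)}_n$ solving $\ee^{(a+i)\sigma+i\theta_k}=z$, i.e. $\sigma^{(k)}_n=\frac{\ln r+i(\theta-\theta_k+2\pi n)}{a+i}$, spaced by $\frac{2\pi i}{a+i}$, with residue $\frac{2ag_k\ee^{2a\sigma^{(k)}_n}}{-(a+i)z}$. Shifting the contour off the real axis and collecting the geometric progression of residues produces the factor $\frac{\ee^{2\pi J A}}{1-\ee^{2\pi A}}$ with $A=-2ai/(a+i)$ as in \eqref{def_of_A}: the ratio between consecutive residues is $\ee^{2a\cdot 2\pi i/(a+i)}=\ee^{-2\pi A}$ from the pole spacing, while the winding number $J=J(r,\theta,k)$ of \eqref{def_of_jj} selects the base pole according to which side of the shifted contour each pole falls on. Using $\frac{2ai}{a+i}=-A$ and $\frac{2a}{a+i}\ln r=\ln(r^{2a/(a+i)})$ one simplifies $\ee^{2a\sigma^{(k)}_0}=r^{2a/(a+i)}\ee^{A(\theta_k-\theta)}$, and collecting the prefactor $\frac{2ag_k}{-(a+i)z}=\frac{2ag_k\ee^{-i\theta}}{-(a+i)r}$ reproduces \eqref{def_of_w} term by term after conjugation. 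Alternatively, one may avoid re-deriving \eqref{def_of_w} and instead observe that both $I(z)^*$ and $w(z)$ are holomorphic off $\Sigma\cup\{0\}$, carry the same circulation $\Gamma_k$ across each arm, and share the asymptotics at $0$ and $\infty$ established in Theorem~\ref{thm1}, whence they coincide by uniqueness.

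The main obstacle is justifying the residue summation in the presence of only conditional convergence: the raw integrand is not absolutely integrable term by term, so one cannot simply push the contour to $\mathrm{Re}\,\sigma\to+\infty$ and interchange the residue sum with the integral. I would handle this by working with the summed integrand throughout and using \eqref{same_integrand_intro} to replace it on a half-line $[\sigma_*,\infty)$ (for $\sigma_*$ large) by the manifestly integrable expression on the right-hand side of that identity; this does not change the value of the now-convergent integral but makes the integrand decay as $\mathrm{Re}\,\sigma\to+\infty$, so that the vertical segments of the shifted contour vanish and the deformation is legitimate. The tail estimate from the convergence step controls the remaining error, and Proposition~\ref{prop_j1} guarantees that the winding-number bookkeeping is consistent under $\theta\mapsto\theta+2\pi$. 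Matching the resulting closed form with \eqref{def_of_w} then completes the proof.
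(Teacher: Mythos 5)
Your proposal is correct and follows essentially the same route as the paper: reduce to $t=1$, use the compatibility conditions to pass to an equivalent, decaying form of the integrand (your geometric-series expansion at $\sigma\to+\infty$ is exactly the algebraic identity \eqref{intro_alg_id}), and sum the geometric progression of residues selected by the winding number to produce the factor $\ee^{2\pi \jj A}/(1-\ee^{2\pi A})$. The only cosmetic difference is the contour geometry: the paper closes with expanding semicircles of radius $d_j$ passing between consecutive poles, split at the critical angle $\varphi_0=\pi+\arctan a+\varepsilon$, and uses the original form of $f$ on one arc and the rewritten form on the other — which is the clean way to implement your idea of using each form of the (single, meromorphic) integrand where it decays.
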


Theorem~\ref{thm4} can be proved using methods of contour integration, which gives further insight into the right-hand side of \eqref{def_of_w}, particularly the meaning of the winding number $\jj$. 

We note that ${v}$ from Theorem~\ref{thm4} equals, on each spiral, the average of its limit values from the left and right sides of the spiral\footnote{In fact, our definition \eqref{def_of_w} of $w$ could have included this detail. We did not include this for brevity, as well as due to the fact that the weak formulation \eqref{weak_euler} is only concerned with ``almost everywhere'' definition of $w$. In fact, the only property of the behaviour of $w$ on the spirals themselves that we use is the velocity matching condition \eqref{velocity_matching}, which is concerned with the normal component of $w$ that remains continuous anyway, due to Theorem~\ref{thm1}(ii) below. }, which often appears as an assumption of models of vortex sheets flow, see \cite{birkhoff,MR1867882,marpul,saffman}. Indeed, this is a consequence of the Biot-Savart \eqref{bs_law}, since points on the sheet correspond to the contour crossing through a single pole, in which case contour integration gives the average of the integrals over the contours that avoid the pole from each side, see Remark~\ref{rem_average} for details. This in particular answers the open question posed by \cite{Lopes} on p.~4132, for such spirals.

\subsection{Properties of the velocity formula (1.12)}\label{sec_prop_of_v}

Finally, we discuss some remarkable properties of the velocity formula \eqref{def_of_w}. In fact, the formula is a vortex sheet velocity field in the sense of Definition~\ref{def_velocity} for \emph{any values of the parameters} \eqref{log_spirals_parameters}, including the choices not satisfying the Euler equations \eqref{eq-disc2}. Moreover, the formula implies continuity of the velocity field from either side of the vortex sheet $\Sigma (t)$, as well as provides precise asymptotic estimates of $v$ and $p$ at $r\to 0$ and at $r\to \infty$, thanks to the winding number $J$. \\

In order to see this, we first observe that
\[
w^* (r\ee^{i\theta } ) = iA (r\ee^{i\theta })^{iA-1} \underbrace{\sum_{k=0}^{M-1} \frac{g_k \ee^{A(\theta_k +2\pi \jj (r,\theta , k)) }}{1-\ee^{2\pi A}} }_{=: D(r,\theta )}
\]
which shows that
\eqnb\label{w_simple_form}
w^* (z) = D\,iA z^{iA-1}.
\eqne
Thus, $w^*$ is not only holomorphic in $\C\setminus (\Sigma \cup \{ 0 \} )$, i.e. between the spirals, but also  the flow is described by the complex potential, that is
\[
w^* = \Phi',
\]
where
\eqnb\label{def_of_potential_fcn_Phi}
\Phi (r \ee^{i\theta }) \coloneqq  D\, (r\ee^{i\theta })^{iA} = r^{iA } \sum_{k=0}^{M-1} g_k \ee^{A(\theta_k - \theta )}\frac{\ee^{2\pi \jj (r,\theta , k)A}}{1-\ee^{2\pi A}}.
\eqne

This observation has a number of consequences. First, we note that \eqref{w_simple_form} suggests that $|w (z)| \sim |z|^{\re\,(iA-1)} = |z|^{(a^2-1)/(a^2+1)}$ for $z$ close to zero, which would indicate singular behaviour of $v$ at the origin for  $a\in (0,1/3]$, as discussed below \eqref{def_of_Amk} in relation to Saffman's \cite[eq. (9) in Section~8.5]{saffman} calculation. We show (in Section~\ref{sec_pf_thm1}) that this suggestion is false, as there is additional dependence on $r$ hidden in $D$ via the winding number $\jj (r,\theta ,k)$. Note that $\jj$ remains constant only when $r$ and $\theta $ are coupled; in particular fixing $\theta \geq 0$ and taking $r\to 0$ the definition \eqref{def_of_jj} gives that $\jj (r,\theta , k)\to +\infty$. We show below (in Section~\ref{sec_pf_thm1}) that this implies that in fact
\eqnb\label{growth_of_w}
|w (z)| \leq C(\theta_k,g_k,a) |z|,\qquad z\in \C,
\eqne
and so in particular $v\in L^2_{\loc}(\R^2 \times (0, \infty ))$ and $v$ has locally finite kinetic energy, $\| v(t) \|_{L^2 (B(x_0,1))} <\infty $ for $x_0\in \R^2$ and each $t>0$. Moreover,  
\[ |\Phi (z)| = |z/A|^{-1} |w(z)| \leq C(\theta_k,g_k,a) |z|^2, \qquad z\in \C.\]

Using  Bernoulli's law \eqref{def_of_p} we can determine the pressure function $p(z,t)$ explicitly and obtain its estimates. This, as well as other properties of $v$, are summarized in the following. 
\begin{theorem}\label{thm1}
Let $a>0$, $\mu\in\R$, $\theta_m \geq 0$, $g_m\in \R$ for $ m \in \{ 0, \ldots ,  M-1 \}$. If $v$ is defined by \eqref{def_of_v} then 
\begin{enumerate}
\item[(i)] The profile function $w^{*}$ is a holomorphic map on the set $\C\setminus(\Sigma\cup\{0\})$, satisfies the pointwise estimate \eqref{growth_of_w} and admits continuous extension to the closure of the region $\Omega_{m}$ for any $0\le m\le M-1$.
\item[(ii)] Given $z\in\Sigma(t)$, let $v^{R}(z,t)$ and $v^{L}(z,t)$ denote the limit velocities $v(z',t)$ as  $z'$ tends to $z$ from the right and left sides of $\Sigma(t)$, respectively. Then 
\begin{equation}
n(z,t)\cdot (v^{R}(z,t) - v^{L}(z,t))=0,\quad t>0, \ z\in \Sigma(t),
\end{equation}
where $n(z,t)$ denotes the unit normal vector to $\Sigma(t)$ at $z$. In particular $v$ is weakly divergence-free.
\item[(iii)] The vorticity $\mathrm{curl}\,v$ is a locally finite measure of the form of logarithmic spirals \eqref{log_spirals}, i.e.,
\begin{equation}\label{rotation}
\mathrm{curl}\, v(t) = \sum_{k=0}^{M-1}\gamma (t)\delta_{\Sigma_{k}(t)}, \quad t>0,
\end{equation}
in the sense of distributions, where $\gamma(Z_{k}(\theta,t),t)\coloneqq {\p_\theta \Gamma_{k}(t,\theta)}\left|{\p_\theta  Z_{k}(\theta,t)}\right|^{-1}$.
\item[(iv)] $v$ is a classical solution of the Euler equations in the region $\{ (x,t) \colon t>0, x\in \Omega_m (t)\}$ between the spirals, with the pressure function $p(z,t)$ defined by the Bernoulli law \eqref{def_of_p}, and
\eqnb\label{growth_p}
| p(z,t)| \leq C (\theta_k , g_k, a) t^{-1} |z|^2
\eqne
\end{enumerate}
\end{theorem}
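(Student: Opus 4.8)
The plan is to reduce everything to the compact representation \eqref{w_simple_form}--\eqref{def_of_potential_fcn_Phi}, namely $w^*(z) = D(r,\theta)\,iA z^{iA-1}$ and $\Phi(z) = D(r,\theta)(r\ee^{i\theta})^{iA}$, and to exploit that the winding numbers $\jj(\cdot,\cdot,k)$ from \eqref{def_of_jj}, hence the coefficient $D$, are locally constant on $\C\setminus(\Sigma\cup\{0\})$ and jump only across the corresponding arm $\Sigma_k$. First I would note that on each $\Omega_m$ the map $w^*$ equals a constant multiple of the holomorphic function $z^{iA-1}$, while the full expression is single-valued by Proposition~\ref{prop_j1}(i); this gives holomorphy on $\C\setminus(\Sigma\cup\{0\})$, and since $z^{iA-1}$ extends continuously up to each bounding spiral away from $0$, it yields the one-sided continuous extension to $\o{\Omega_m}$ asserted in (i). Continuity up to the origin, with value $0$, will follow from the growth bound below.

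The main obstacle is the pointwise estimate \eqref{growth_of_w}, precisely the place where the naive reading $|w|\sim|z|^{(a^2-1)/(a^2+1)}$ of \eqref{w_simple_form} is \emph{wrong}: one must extract the hidden $r$-dependence carried by $\jj$. By definition \eqref{def_of_jj}, the quantity $\rho_k := a(2\pi\jj(r,\theta,k)+\theta_k-\theta)+\ln r$ satisfies $\rho_k\in(0,2\pi a]$, so that $2\pi\jj+\theta_k = \theta - a^{-1}\ln r + a^{-1}\rho_k$ with $a^{-1}\rho_k$ bounded. Writing $z^{iA} = \ee^{iA\ln r - A\theta}$ and substituting, the $\theta$-dependence cancels and the surviving power of $r$ is governed by the algebraic identity $A(i-a^{-1}) = 2$, immediate from \eqref{def_of_A} since $ai-1 = i(a+i)$. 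This gives $|(r\ee^{i\theta})^{iA}\,\ee^{A(\theta_k+2\pi\jj)}| = r^2\,|\ee^{A\rho_k/a}| \le C r^2$ uniformly in $\theta$, because $\re A<0$ and $\rho_k/a$ is bounded. Summing over $k$ yields $|\Phi(z)|\le C|z|^2$ and then $|w(z)| = |A/z|\,|\Phi(z)| \le C|z|$, i.e.\ \eqref{growth_of_w}; this completes (i), forces $w^*\to 0$ at the origin, and gives $v\in L^2_{\loc}$. This step is the heart of the theorem and the place where the winding number does its essential work.

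For (ii) I would compute the jump of $w^*$ across a single arm $\Sigma_k$. Crossing $\Sigma_k$ changes $\jj(\cdot,\cdot,k)$ by one and leaves the other summands untouched, so the jump in $D$ collapses to a single term and $\Delta w^* = -g_k\,\ee^{A(\theta_k+2\pi\jj)} iA z^{iA-1}$. On $\Sigma_k$ one has $\rho_k = 0$, i.e.\ $2\pi\jj+\theta_k = \theta-a^{-1}\ln r$ exactly; inserting this and using $A(i-a^{-1})=2$ again gives $\Delta w^* = -g_k\, iA\,\o z$, whence $\Delta w = g_k\, i\o A\, z$. Since $i\o A/(a+i) = -2a/(a^2+1)\in\R$ and the tangent to the spiral at $z$ is $\p_\theta Z_k = (a+i)Z_k$, the jump $\Delta w$ is a real multiple of the tangent vector, so $n\cdot(v^R-v^L)=0$, which is (ii). As $w^*$ is holomorphic, hence curl- and divergence-free, on each $\Omega_m$, the only distributional contributions to $\div v$ and to $\mathrm{curl}\,v$ come from $\Sigma$; the vanishing normal jump gives weak incompressibility, while the tangential jump produces a vortex sheet, establishing \eqref{rotation} in (iii). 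Matching magnitudes, $|\Delta w| = 2a|g_k|r/\sqrt{a^2+1}$ equals $\p_\theta\Gamma_k/|\p_\theta Z_k| = 2ag_k r/\sqrt{a^2+1}$, identifying $\gamma$ with the stated density, and local finiteness follows from the linear bound $|\Delta w|\lesssim r$.

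Finally, for (iv), on each $\Omega_m$ the flow is potential, $w=\nabla\re\Phi$ and hence $v=\nabla\re\Psi$, so the convective term satisfies $v\cdot\nabla v = \tfrac12\nabla|v|^2$ (as $\mathrm{curl}\,v=0$ there) and $\p_t v = \nabla\p_t\re\Psi$ is also a gradient; thus the Euler momentum equation is solved classically by the pressure $p$ prescribed through Bernoulli's law \eqref{def_of_p}, which reproduces the self-similar profile \eqref{Bernoulli_complex_ss}. The pointwise bound \eqref{growth_p} then follows by inserting $|\Phi(z)|\le C|z|^2$, $|z\,w^*(z)|\le C|z|^2$ and $|w(z)|^2\le C|z|^2$ into \eqref{Bernoulli_complex_ss} and rescaling via the self-similar relation $p(z,t)=t^{2\mu-2}q(z/t^\mu)$.
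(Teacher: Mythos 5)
Your proposal is correct and follows essentially the same route as the paper: the compact form $w^*=D\,iAz^{iA-1}$ plus the winding-number bounds of Proposition~\ref{prop_j1} for holomorphy and the estimate \eqref{growth_of_w}, the single-term collapse of the jump across $\Sigma_k$ (tangential, of the right magnitude) for (ii)--(iii), and Bernoulli's law for (iv). Your identity $A(i-a^{-1})=2$ together with $\rho_k\in(0,2\pi a]$ is just a clean repackaging of the paper's use of Proposition~\ref{prop_j1}(iii) with $\re A<0$; the only points to tidy are the sign (not merely the magnitude) of the tangential jump when identifying $\gamma$, and the explicit cutoff at $\p B(\eta)$ (using \eqref{growth_of_w}) in the distributional computations of $\div v$ and $\mathrm{curl}\,v$.
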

We emphasize that the claims of the above theorem hold for \emph{all values} of the parameters \eqref{log_spirals_parameters}. 

The structure of the article is as follows. In the following Section~\ref{sec_prelim} we recall some basic concepts related to a description of 2D fluid flows in the complex plane, as well as an elementary technical lemma. In Section~\ref{sec_vel_field} we first discuss the notion of the winding index \eqref{def_of_jj} (Section~\ref{sec_winding_number}) and we prove Theorem~\ref{thm1} (Section~\ref{sec_pf_thm1}). In the following Section~\ref{sec_euler} we prove our main result, Theorem~\ref{thm_main}, and in Section~\ref{sec_log_solutions} we prove the well-definiteness of the logarithmic spiral vortex sheets as solution to the Euler equations \eqref{weak_euler}, Theorem~\ref{thm_alex}, including the equivalence of the velocity matching condition \eqref{velocity_matching} and the pressure matching condition \eqref{pressure_matching} to the imaginary and real parts of \eqref{eq-disc2} in Sections~\ref{sec_vel_matching} and \ref{sec_p_matching}, respectively. We prove the sharpness of Delort's Theorem~\ref{thm_delort}, Corollary~\ref{cor_sharpness}, in Section~\ref{sec_sharpness}, and the equivalence of the Birkhoff-Rott approach and the Euler approach, Theorem~\ref{thm4}, in Section~\ref{sec_bs}.

\section{Preliminaries}\label{sec_prelim}

We denote the complement of $A\subset \R^2$ by $A^c$, and we denote the complex conjugate of $z=x+iy$ by $z^*=x-iy$.
We say that a holomorphic function
\[
\Psi  = \psi  + i \chi 
\]
is the complex potential function of $v^*=v_1-iv_2$ if
\[
v^* = \Psi',
\]
where $f'$ denotes the  derivative of a complex function $f$. In such case
\[
\begin{split} v_1 (x,y) &= \psi_x (x,y) = \chi_y (x,y),\\
v_2(x,y) &= \psi_y (x,y) = -\chi_x (x,y).
\end{split}
\]
In other words, in real variables, the velocity field $(v_1,v_2)$ satisfies 
\[
(v_1,v_2)= \na \psi = -\na^\perp \chi,
\]
where $\na^\perp \coloneqq ( -\p_y , \p_x)$. Note that existence of the complex potential implies that the flow $(v_1,v_2)$ is irrotational, due to the Cauchy-Riemann equations, see \cite[Appendix~1.3]{marpul} for details.

Assuming that both $v_1$, $v_2$, $\psi$, $\chi$ are also functions of time, then the unsteady Bernoulli law,
\eqnb\label{Bernoulli_real}
p+ \p_t \psi + \frac{1}{2} (v_1^2 +v_2^2 ) = C
\eqne
becomes
\eqnb\label{Bernoulli_complex}
p+ \p_t \re\, \Psi + \frac12 |v|^2 =C.
\eqne
In particular the velocity field $(v_1,v_2)(x,y,t)$ satisfies the 2D incompressible Euler equations \eqref{euler_intro} with pressure function $p$, which follows by taking the gradient of \eqref{Bernoulli_real}. 

If the flow is self-similar, i.e. that
\[
v (z,t) = t^{\mu-1} w \left( \frac{z}{t^\mu } \right),\qquad \Psi (z,t) = t^{2\mu -1} \Phi \left( \frac{z}{t^\mu } \right),\qquad p(z,t) = t^{2\mu-2 } q\left( \frac{z}{t^\mu } \right)
\]
for all $t>0$, where $w(z)=v(z,1)$, $\Phi (z)=\Psi (z,1)$, $q(z)=p(z,1)$. Then $w^*=\Phi'$ and the Bernoulli law \eqref{Bernoulli_complex} takes the form \eqref{Bernoulli_complex_ss},
\[
q(z) + \re \left( (2\mu -1) \Phi(z) - \mu z w^* \right) + \frac{1}2 |w|^2 =C.
\]
In particular, taking ``$\na $'' gives the self-similar form of the Euler equations \eqref{euler_intro},
\eqnb\label{euler_ss}
\na q + (\mu -1 ) w - \mu (z\cdot \nabla )w + (w\cdot \nabla )w =0.
\eqne 

We conclude this section with a technical lemma regarding separation of $\exp (z)$ from $0$.

\begin{lemma}\label{lem-11-aabb}
For every $\ve,r>0$ there exists $c>0$ such that
\begin{align*}
|\ee^{z}-r|\ge c>0\quad\text{for}\quad  z\in \C\setminus(\bigcup_{j\in\Z} B(\ln r + 2j\pi i, \ve)).
\end{align*}
\end{lemma}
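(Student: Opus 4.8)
The plan is to prove the separation bound by analyzing the behavior of $\ee^z$ away from the preimages of $r$ under the exponential map. The key observation is that $\ee^z = r$ precisely when $z \in \{\ln r + 2j\pi i : j \in \Z\}$, so the excluded balls are exactly small neighborhoods of the zero set of $\ee^z - r$. I would split the complex plane into regions based on the real part $\re z =: x$ and handle them separately.

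First I would treat the region where $x$ is large or very negative. If $\re z \geq \ln r + 1$, then $|\ee^z| = \ee^x \geq \ee \cdot r > r$, so $|\ee^z - r| \geq \ee^x - r \geq (\ee - 1)r$, giving a uniform lower bound; similarly if $\re z \leq \ln r - 1$ then $|\ee^z| = \ee^x \leq \ee^{-1} r$, so $|\ee^z - r| \geq r - \ee^x \geq (1 - \ee^{-1})r$. Thus on the complement of the vertical strip $S \coloneqq \{ z : |\re z - \ln r| < 1 \}$ we already have a uniform positive lower bound depending only on $r$.

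It remains to handle the strip $S$ minus the excluded balls. Here I would use periodicity: writing $z = x + iy$, the function $\ee^z - r$ depends on $y$ only through $\ee^{iy}$, which is $2\pi$-periodic. After translating by the lattice $2\pi i \Z$, it suffices to consider the compact rectangle $K \coloneqq \{ z : |\re z - \ln r| \leq 1,\ |\im z| \leq \pi \}$ with the ball $B(\ln r, \ve)$ removed (the only lattice point whose excluded ball meets $K$ near the center). On the compact set $K \setminus B(\ln r, \ve)$ the continuous function $|\ee^z - r|$ attains a positive minimum, since it vanishes only at $z = \ln r$, which has been excised. Combining this minimum with the bounds from the previous paragraph and taking $c$ to be the smaller of the two constants yields the claim.

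I expect the only mild subtlety to be bookkeeping near the boundary $\im z = \pm\pi$ of the fundamental domain, where the excluded balls centered at $\ln r \pm 2\pi i$ protrude slightly into $K$; but since those balls only help by removing more points, and the strip argument plus compactness already cover the rest, this causes no real difficulty. The main conceptual point is simply recognizing that the excluded balls are exactly the neighborhoods of the zeros of $\ee^z - r$ and exploiting $2\pi i$-periodicity to reduce an unbounded-domain problem to a compact one.
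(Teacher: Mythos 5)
Your proof is correct and follows essentially the same strategy as the paper's: a triangle-inequality bound for $\re z$ far from $\ln r$, combined with $2\pi i$-periodicity and a compactness argument on a fundamental domain with the offending ball removed (the paper uses the strip $0\le \im z\le 2\pi$ and a threshold $M$ with $\ee^M\ge 3r/2$, $\ee^{-M}\le r/2$, rather than your window $|\re z-\ln r|\le 1$, but this is only a cosmetic difference). No gaps.
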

\begin{proof} It suffices to show that 
\begin{align*}
|e^{z}-r|\ge c>0\quad \text{ for } z\in B,
\end{align*}
where
\begin{align*}
B\coloneqq  \{z\in \C \ | \ 0\le\mathrm{Im}\,z\le 2\pi\}\setminus(B(\ln r,\ve)\cup B(\ln r + 2\pi i,\ve)).
\end{align*}
We first choose $M>0$ such that
\[
\ee^M \geq 3r/2 \qquad \text{ and } \qquad \ee^{-M} \leq r/2.
\]
Since $|\ee^z-r | \ne 0$ on the compact set $B \cap \{ -M \leq \re \,z \leq M \}$, there exists $c\in (0,r/2)$ such that 
\[
|\ee^z-r | \geq c > 0 \quad \text{ on } B \cap \{ -M \leq \re\, z \leq M \}.
\]
For $\re z \geq M$ we can use the triangle inequality to obtain 
\begin{align*}
|\ee^{z}-r|\ge |\ee^{z}|-r = \ee^{\mathrm{Re}\,z} -r \ge \ee^{M} -r \ge r/2 > c,
\end{align*}
and similarly
\begin{align*}
|\ee^{z}-r|\ge r-|\ee^{z}| = r-\ee^{\mathrm{Re}\,z} \ge r-\ee^{-M} \ge r/2 > c 
\end{align*}
for $\re \, z \leq -M$.  \end{proof}

\section{Velocity field of logarithmic spirals}\label{sec_vel_field}
In this section we prove Theorem~\ref{thm1}, that is we prove a number of properties of self-similar velocity field $v$ (recall \eqref{def_of_v}) with the self-similar profile $w$ given by \eqref{def_of_w},
\[ 
w(z) \coloneqq  \ee^{i\theta } \sum_{k=0}^{M-1} \frac{2ag_{k} }{r(a-i)} \left(  r^{\frac{2a}{a+i} }\ee^{A  (\theta_k -\theta ) } \frac{\ee^{2\pi \jj(r,\theta ,k)A }}{1-\ee^{2\pi A}} \right)^*.
\]

\subsection{The winding number $\jj$}\label{sec_winding_number}

Here we comment on the notion of the winding number $\jj (r,\theta , k )$. 

First we note that $\jj (r,\theta , k )$ decreases by $1$ whenever $z$ crosses the $k$-th spiral from left to right. Indeed, the inequality in \eqref{def_of_jj} becomes an equality when $r=\ee^{a((\theta-2\pi j ) - \theta_k ) }$, i.e. when $r=|Z_k (\theta' )|$ for $\theta'\coloneqq \theta - 2\pi j$, where $j\in \Z$. Since the exponential representation $z=r\ee^{i\theta }$ is invariant with respect to adding $2\pi j$ to $\theta$, the winding number $\jj (r,\theta , k)$ can be thought of as ``the number of the loop of the the spiral $\Sigma_k$''. For example, if $\theta \in [0,2\pi )$ and $r\in [\ee^{a(\theta - \theta_k)}, \ee^{a(\theta +2\pi - \theta_k)})$ then the part of the spiral $\Sigma_k$ given by $\{ Z_k (\theta' ) \colon \theta \in [0,2\pi ) \}$ is the closest segment of $\Sigma_k$ to $z=r\ee^{i\theta }$ in the direction towards the origin.

We now prove some basic properties of the winding number $\jj$.

\begin{proposition}[Properties of the winding number $\jj$]\label{prop_j1}
The winding number $\jj (r,\theta ,k)\in\Z$ satisfies the following properties. 
\begin{enumerate}
\item[(i)] $\jj(r,\theta+2l\pi,k)=\jj(r,\theta,k)+l$ for each $l\in \Z$.
\item[(ii)] $\jj (\cdot , \cdot , k )$ is constant in the set $\{ (r,\theta ) \in (0,\infty )\times \R \colon \ee^{a(\theta-2\pi (j-1) -\theta_k )}\geq r > \ee^{a(\theta-2\pi j -\theta_k )} \}$ for any $j\in \Z$, $k\in \{ 0 ,\ldots , M-1 \}$.  
\item[(iii)] For each $r>0$, $\theta \in \R$, $k\in \{ 0 ,\ldots , M-1 \}$,
\[
-\frac{1}{2\pi}\left(\frac{\ln r}{a}+\theta_{k}-\theta \right)< \jj (r,\theta,k)\le -\frac{1}{2\pi}\left(\frac{\ln r}{a}+\theta_{k}- \theta \right)+1.
\]
\item[(iv)] Given $z=\ee^{a(\theta - \theta_m )}\ee^{i\theta }\in \Sigma_m$ for some $m\in \{ 0,\ldots , M-1 \}$, i.e. that $r=\ee^{a(\theta - \theta_m )}$, let $\jj^{R} (r,\theta , k)$ and $\jj^{L}(r,\theta , k)$ denote the limits of $\jj(r',\theta' , k)$ as  $(r',\theta')\to (r,\theta )$ in a way that $z'=r'e^{i\theta'}\to z$, from the right and left sides of $\Sigma_m$, respectively. Then
\[
\jj^{R} ( r, \theta , k ) = 1_{\theta_{k}<\theta_{m}},\quad \jj^{L} ( r, \theta , k ) = \jj (r,\theta ,k) =1_{\theta_{k}\le\theta_{m}},
\]
\end{enumerate}
\end{proposition}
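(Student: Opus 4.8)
The plan is to rewrite the defining inequality in \eqref{def_of_jj} so that $\jj$ becomes the least integer exceeding an explicit real number; after that, parts (i)--(iii) are immediate and part (iv) reduces to locating the point $z$ among the level regions supplied by part (ii). Concretely, I would first observe that $a(2\pi j + \theta_k - \theta)+\ln r>0$ is equivalent to $j>L(r,\theta,k)$, where
\[
L(r,\theta,k)\coloneqq -\frac{1}{2\pi}\left(\frac{\ln r}{a}+\theta_k-\theta\right).
\]
Hence $\jj(r,\theta,k)=\min\{j\in\Z: j>L\}$ is the least integer strictly larger than $L$, so that $\jj=\lfloor L\rfloor+1$ regardless of whether $L\in\Z$.

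From this description the first three parts follow at once. For (i), sending $\theta\mapsto\theta+2l\pi$ sends $L\mapsto L+l$, so the minimizing integer increases by exactly $l$. For (iii), minimality gives $\jj>L$, while $\jj-1\le L$ (otherwise $\jj-1$ would already satisfy the strict inequality), which is precisely $L<\jj\le L+1$. For (ii), the two inequalities defining the set, namely $r\le \ee^{a(\theta-2\pi(j-1)-\theta_k)}$ and $r>\ee^{a(\theta-2\pi j-\theta_k)}$, become $L\ge j-1$ and $L<j$ after taking logarithms and dividing by $a$; on $\{j-1\le L<j\}$ we have $\lfloor L\rfloor=j-1$, so $\jj=j$ is constant there.

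For (iv) the key step is to evaluate $L$ on $\Sigma_m$. Substituting $\ln r=a(\theta-\theta_m)$ gives $L(r,\theta,k)=(\theta_m-\theta_k)/2\pi$, and since $\theta_k,\theta_m\in[0,2\pi)$ this lies in $(-1,1)$, vanishing precisely when $k=m$. Therefore $\jj=\lfloor L\rfloor+1$ equals $1$ when $\theta_k\le\theta_m$ and equals $0$ when $\theta_k>\theta_m$; that is, $\jj(r,\theta,k)=1_{\theta_k\le\theta_m}$, which is the claimed value of $\jj$ and of $\jj^{L}$. To obtain the one-sided limits I would fix a continuous angular representative $\theta'\to\theta$ as $z'\to z$ and invoke (ii): the value $\jj(\cdot,\cdot,k)$ is constant on each of its level regions, whose boundaries are loops of $\Sigma_k$. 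If $k\ne m$, then $\Sigma_k$ and $\Sigma_m$ are disjoint (two logarithmic spirals with the same $a$ but $\theta_k\ne\theta_m$ cannot meet, since equal radii at a common angle would force $\theta_m-\theta_k\in 2\pi\Z$), so $z$ lies in the interior of one level region and $\jj^{R}=\jj^{L}=\jj(r,\theta,k)$, which agrees with $1_{\theta_k<\theta_m}=1_{\theta_k\le\theta_m}$ in this case. If $k=m$, then $z$ lies on the boundary $r=\ee^{a(\theta-\theta_m)}$ separating the region $\{\jj=1\}$ (the side $r'<r$) from $\{\jj=0\}$ (the side $r'>r$); the closed inequality in (ii) places this boundary in $\{\jj=1\}$, while the rule that $\jj$ decreases by one across a left-to-right crossing identifies the smaller-$r$ side as the left side. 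Hence $\jj^{L}=1=1_{\theta_m\le\theta_m}$ and $\jj^{R}=0=1_{\theta_m<\theta_m}$.

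The only genuinely delicate point is the one-sided limit computation in (iv): one must pass to a continuous angular lift $\theta'$ as $z'\to z$ and correctly match the geometric left/right sides of $\Sigma_m$ with the smaller-$r$/larger-$r$ level regions, reconciling the half-open convention built into (ii) with the orientation convention that a left-to-right crossing lowers $\jj$ by one. Everything else is a direct unwinding of the definition \eqref{def_of_jj}.
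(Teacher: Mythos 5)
Your proof is correct and follows essentially the same route as the paper: both arguments reduce everything to the quantity $L(r,\theta,k)=-\tfrac{1}{2\pi}\bigl(\tfrac{\ln r}{a}+\theta_k-\theta\bigr)$, with (i)--(iii) read off directly from the definition and (iv) obtained by evaluating $L$ on $\Sigma_m$ (the paper computes the one-sided limits of $L$ directly, while you phrase the same computation via $\jj=\lfloor L\rfloor+1$ and the level regions of (ii), which is a slightly more explicit but equivalent packaging). The only cosmetic difference is your identification of the geometric left side with the smaller-$r$ side via the ``left-to-right crossing lowers $\jj$'' convention; the paper likewise asserts rather than derives this orientation fact, so you are at the same level of rigor.
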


Note that Proposition~\ref{prop_j1}(i) shows that the velocity field \eqref{def_of_v} and the complex potential \eqref{def_of_potential_fcn_Phi} are well-defined, despite the fact that the domain of $\jj (\cdot, \cdot , k)$ is $(0,\infty ) \times \R$ (instead of $(0,\infty )\times [0,2\pi)$); namely 
\eqnb\label{w_welldef}
w(r\ee^{i\theta } ) = w\left( r\ee^{i(\theta + 2l\pi ) } \right),\quad  \Phi (r\ee^{i\theta } ) = \Phi \left( r\ee^{i(\theta + 2l\pi ) } \right),  \qquad r>0, \theta \in \R, l\in \Z.
\eqne
Moreover, Proposition~\ref{prop_j1}(ii) shows that considering behaviour of $w$ in $\Omega_m$ we can restrict ourselves to $\jj (r,\theta , m)$ constant.

Another way of visualizing the behaviour of $\jj$ is to note that the decrease of $\jj (r,\theta , m)$ by $1$, i.e. the crossing of $\Sigma_m$ from left to right, corresponds to the $m$-th ingredient of the sum \eqref{def_of_w} being multiplied by $\ee^{-2\pi A^*}$. However, since the spirals are not closed contours, the same effect can be achieved by moving with the direction of the spiral (anticlockwise) to the point on the other side of the spiral by, say, keeping $r$ constant and increasing $\theta $ by almost $2\pi$\footnote{During such procedure $\jj (r,\theta , m)$ would remain constant, but all the other $\jj (r, \theta , k)$, for $k\ne m$, would increase by $1$ as during the procedure we must have crossed each of $\Sigma_k$, right to left, see Figure~\ref{fig_spirals} for a sketch. Thus, for $k\ne m$, $\jj(r,\theta ,k)$ has increased by $1$, and so the $k$-th ingredient of the sum \eqref{def_of_w} got multiplied by $\ee^{2\pi A^*}$. Moreover all ingredients got multiplied by $\ee^{-2\pi A^*}$, from the term ``$\ee^{A(\theta_k-\theta )}$'', as $\theta$ increased. Thus the overall effect, i.e. multiplication of the $m$-th term by $\ee^{-2\pi A^*}$, is the same as before.}.\\
\begin{proof}[Proof of Proposition~\ref{prop_j1}.]
Claims (i), (ii) and (iii) follow directly from the definition \eqref{def_of_jj}. 

As for (iv), let us first assume that $k\neq m$. If $(r',\theta')\to (\ee^{a(\theta - \theta_m)},\theta )$, then 
\begin{align*}
-a^{-1}\ln r'+\theta'-\theta_{k} \to \theta_{m}-\theta_{k}\neq 0,
\end{align*}
and so
\[
\jj (r',\theta' ,k) =\min\left\lbrace j\in \Z \colon 2\pi j>-\frac{\ln r'}{a}+\theta'-\theta_{k} \right\rbrace = \min\left\lbrace j\in \Z \colon 2\pi j> \theta_{m}-\theta_{k} \right\rbrace = 1_{\theta_{k}< \theta_{m}}
\]
for $(r',\theta')$ sufficiently close to $(r,\theta )$, as desired. 

If $k=m$ then (iii) gives that $0<\jj (r, \theta ,m) \leq 1$, and so $\jj (r, \theta , m)=1$. If $z'=r'e^{i\theta'}$ approaches $z$ from the left side of $\Sigma_m$ then, since $\ee^{a(\theta-\theta_m )} \ee^{i\theta} \in \Sigma$, we must have 
\[
- \frac{1}{2\pi }\left( \frac{\ln r'}{a} + \theta_m - \theta' \right) \to 0^+,
\]  
and so (iii) gives $\jj^L (r, \theta , m)=1$. Analogously, if $z'=r'e^{i\theta'}$ approaches $z$ from the right then $- \left( a^{-1} {\ln r'} + \theta_m - \theta' \right)/2\pi \to 0^-$, which gives that $\jj^R (r,\theta , m)=0$. 
\end{proof}

\begin{figure}[h]
\centering
 \includegraphics[width=0.5\textwidth]{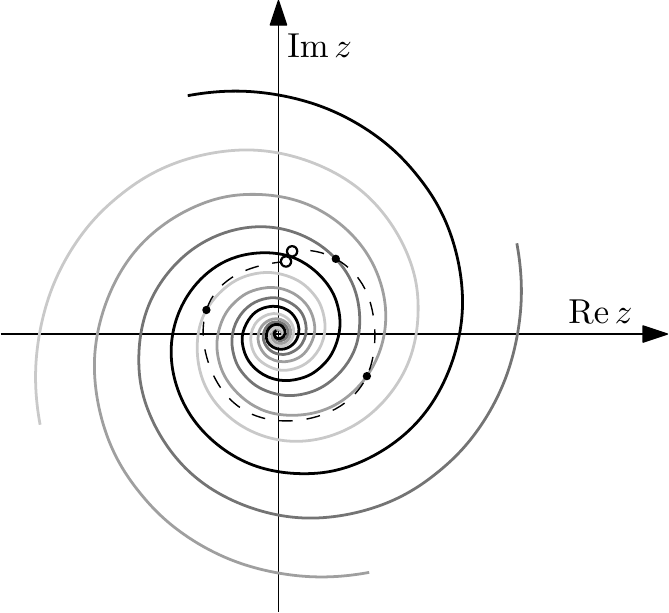}
 \nopagebreak
  \captionof{figure}{A sketch of $\Sigma= \bigcup_{k=0}^{M-1} \Sigma_k$ in the case of Alexander's spirals \eqref{def_of_alex} with $M=4$. Here the two circles at the darkest spiral represent a point on the spiral considered as a limit point from both sides of the spiral, the dashed line illustrates the procedure described below Proposition~\ref{prop_j1}, and the three black dots represent the crossing points with the other spirals that must occur during the procedure.}\label{fig_spirals} 
\end{figure}

\subsection{Proof of Theorem~\ref{thm1}}\label{sec_pf_thm1}
 That $w^*$ is holomorphic in $\Omega_m$ and admits continuous extension to the closure of $\Omega_m$ for each $m$ follows as observed in \eqref{w_simple_form}, by recalling the well-definiteness \eqref{w_welldef} and Proposition~\ref{prop_j1}(ii). In particular this gives that $v$ is irrotational and divergence-free in the regions between the spirals, as these properties are equivalent to the Cauchy-Riemann equations. As for the pointwise estimate \eqref{growth_of_w}, note that Proposition~\ref{prop_j1}(iii) gives
\begin{equation*}
2\pi \jj (r, \theta , k ) > -\frac{\ln r}{a}- (\theta_{k} -\theta )
\end{equation*}
for all $r>0$, $\theta\in\R$, $k\in\Z$. Thus 
\[
\begin{split}
| w(r\ee^{i\theta }) | &\leq  \sum_{k=0}^{M-1} \left|\frac{2ag_{k} }{r(a-i)}   r^{\frac{2a}{a+i} }\ee^{A  (\theta_k -\theta ) } \frac{\ee^{2\pi \jj(r,\theta ,k)A }}{1-\ee^{2\pi A}} \right| \\
&\leq  Cr^{\frac{2a^2}{a^2+1}-1}\sum_{k=0}^{M-1} \ee^{\re\,A  ( (\theta_k -\theta ) + 2\pi \jj(r,\theta ,k) ) } \leq  C r^{\frac{a^2-1}{a^2+1}} \ee^{-\re \,A \frac{\ln r}{a} } = C r 
\end{split}
\]
for every $r>0$, $\theta \in \R$, as required, where we used the fact that $\re  \, A<0$ in the last inequality, and recalled \eqref{def_of_A} in the last step. 

As for claim (ii) of Theorem~\ref{thm1} we recall   that $v$ admits a jump as $z$ crosses $\Sigma_{m}(t)$, for each $m\in \{ 0, \ldots , M-1 \}$, given by the multiplication of the $m$-th element of the sum in \eqref{def_of_w} by $\ee^{-2\pi A^*}$. We now show that this corresponds to the the jump of $v$ in the direction tangent to the spiral. 

Indeed, since $v^R$ and $v^L$ can be expressed using \eqref{def_of_v} by applying the values $\jj^R$ and $\jj^L$ (respectively) of the winding number $\jj$ from Proposition~\ref{prop_j1}(iv), we obtain
\eqnb\label{v_jump}
\begin{split}
v^R &(Z_{m}(\theta,t),t)- v^L(Z_{m}(\theta,t),t) \\
&=  t^{\mu-1} \ee^{i\theta } \sum_{k=0}^{M-1}  \frac{2ag_{k} }{\ee^{a(\theta - \theta_m )} (a-i)} \left( \ee^{\frac{2a^2}{a+i} (\theta - \theta_m) } \ee^{A(\theta - \theta_k )} \frac{ \ee^{2\pi\jj^{R}(|Z_{m}(\theta,1)|, \theta,k) A }-\ee^{2\pi\jj^{L}(|Z_{m}(\theta,1)|, \theta,k) A } }{1-\ee^{2\pi A }}\right)^*\\
&= t^{\mu-1} \ee^{i\theta }  \frac{2ag_{m}\ee^{a (\theta - \theta_m) } }{a-i} = \frac{2a}{a^2+1} g_m\,t^{\mu-1} \ee^{a(\theta - \theta_m )}  \ee^{i\theta } (a+i ),
\end{split}
\eqne
which has the same direction as the tangent to $\Sigma_m (t)$ at $Z_m (\theta,t)$, as
\eqnb\label{Z_theta}
\p_\theta Z_m(\theta,t) = (a+i )  t^\mu \ee^{a(\theta - \theta_m )} \ee^{i\theta }.
\eqne

Since the jump across each spiral occurs in the direction tangent to the spiral, we see that $v\cdot n$ is continuous across each spiral, where $n$ denotes a unit vector orthogonal to the spiral. This together with the decay \eqref{growth_of_w} of $v$ at the origin, and the fact that $\div\, v=0$ pointwise in the regions between the spirals shows that $v$ is weakly divergence free. To be more precise, 
\[
\int_{\R^{2}} v\cdot\nabla\psi = \lim_{\eta\to 0} \int_{B(\eta)^{c}} v\cdot\nabla\psi = \lim_{\eta\to 0}\left(\int_{\Sigma\cap B(\eta )^c } \underbrace{ \left(v^{R} - v^{L}\right) \cdot n}_{=0} \psi -\int_{\p B(\eta ) }\frac{v\cdot z}{|z|} \psi \right)\\
 =  0,
 \]
for every $\psi \in C_0^\infty (\R^2 )$ and every $t>0$, where we used \eqref{growth_of_w} in the last step. 

As for claim (iii) of Theorem~\ref{thm1} we note that, since $w^*$ is holomorphic in $\Omega_m$ for every $m$ we see that $v$ is irrotational in the regions between the spirals. Hence the measure $\mathrm{curl}\,v (t)$ is supported on $\Sigma(t)$ at each time. In order to show the explicit formula \eqref{rotation},
\[
\mathrm{curl}\, v(t) = \sum_{k=0}^{M-1}\gamma(t)\delta_{\Sigma_{k}(t)},\qquad t>0,
\]
where $\gamma(Z_{k}(\theta,t),t)\coloneqq {\p_\theta \Gamma_{k}(t,\theta)} \left|{\p_\theta Z_{k}(\theta,t)} \right|^{-1}$, we note that \eqref{v_jump} and \eqref{Z_theta} give
\[
\begin{split}\p_\theta Z_{k}(\theta,t) \cdot \left( v^{R}(Z_{k}(\theta,t),t) - v^{L}(Z_{k}(\theta,t),t) \right)  &=\mathrm{Re}\left((v^{R}(Z_{k}(\theta,t),t) - v^{L}(Z_{k}(\theta,t),t))(\p_\theta Z_{k}(\theta,t))^*\right) \\
&= 2a\,g_kt^{2\mu -1}\ee^{2a(\theta - \theta_k) } \\
& = \p_\theta \Gamma_k (\theta , t)
\end{split}
\]
for $k\in \{ 0, \ldots , M-1\}$, $\theta\in\R$, $ t>0$, or equivalently
\[
\tau \cdot (v^R-v^L) = \gamma \qquad \text{ on } \Sigma (t),\,t>0,
\]
where 
\[
\tau (Z_{k}(\theta,t),t) \coloneqq \frac{\p_\theta Z_{k}(\theta,t)}{\left| \p_\theta Z_{k}(\theta,t) \right| }
\]
denotes the unit tangent vector to $\Sigma_k(t)$ in the same direction as the orientation of the spiral at $Z(\theta ,t)\in \Sigma_k (t)$. Thus Theorem~\ref{thm1}(iii) follows by writing
\[
\int_{\R^{2}} v^{\bot}\cdot\nabla\psi = \lim_{\eta\to 0} \int_{B(\eta)^{c}} v^{\bot}\cdot\nabla\psi = \lim_{\eta\to 0}\left(\int_{\Sigma\cap B(\eta )^c} (v^{R} - v^{L})\cdot \tau \psi -\int_{\p B(\eta ) }\frac{v^{\bot } \cdot z}{|z|} \psi \right) = \int_{\Sigma}\gamma\,  \psi 
\] 
for any $\psi\in C^{\infty}_{0}(\R^{2})$, where we used \eqref{growth_of_w} in the last step. 

The claim of Theorem~\ref{thm1}(iv) follows  directly from the Bernoulli law, recall the comment below \eqref{Bernoulli_complex}. 

\section{Vortex sheet velocity fields as weak solutions of the Euler equations}\label{sec_euler}
 
In this section we prove Theorem~\ref{thm_main}, that is we show that any vortex sheet velocity (recall Definition~\ref{def_velocity}) is a weak solution of the Euler equations on $\R^2\times (0,\infty )$ if and only if the matching conditions \eqref{velocity_matching}, \eqref{pressure_matching} hold, i.e. that 
\[
n(z,t) \cdot \left( v(z,t)t   - \mu z \right) =0, \quad \text{ and } \quad p \text{ is continuous at }z
\] 
for $z\in \Sigma (t)$, $t>0$.

We fix a divergence-free $\varphi\in C_0^\infty (\R^2 \times (0,\infty ))$ and set
\[
\phi (z,t) \coloneqq  \varphi (zt^\mu ,t).
\]
Using the decay of $w$ at the origin (recall Definition~\ref{def_velocity}(4)) we obtain 
\[\begin{split}
\int_0^\infty &\int_{\R^2}\left( \sum_{i,j=1}^{2} v_i v_j \p_i \varphi_j + v \cdot \varphi_t \right) \\
&= \lim_{\eta \to 0} \int_0^\infty t^{2\mu } \int_{B(\eta )^c} \left[ t^{2\mu -2} \sum_{i,j=1}^{2}w_i w_j t^{-\mu } \p_i \phi_j + t^{\mu -1} w\cdot \phi_t-\mu t^{\mu -1} \sum_{i,j=1}^{2}w_j\p_i \phi_j \frac{z_i}{t}  \right] \d z \, \d t \\
&=\lim_{\eta \to 0}\left(-\int_0^\infty t^{2\mu } \int_{B(\eta )^c} t^{\mu -2} \sum_{i,j=1}^{2}w_i \p_i w_j \phi_j +\int_0^\infty t^{2\mu } \int_{\Sigma_{RL}\cap B(\eta )^c }\sum_{i,j=1}^{2}w_i n_i w_j \phi_j t^{\mu -2}\right.\\
&+ \int_0^\infty t^{3\mu -1} \p_t \left( \int_{B(\eta )^c} w\cdot \phi  \right)\\
&+\mu \int_0^\infty t^{3\mu -2 } \int_{B(\eta )^c} \sum_{i,j=1}^{2} \p_i w_j \phi_j {z_i}+2\mu \int_0^\infty t^{3\mu -2} \int_{B(\eta )^c} \sum_{j=1}^{2}w_j \phi_j   \\
&\left.-\mu \int_0^\infty t^{3\mu-2} \int_{\Sigma_{RL} \cap B(\eta )^c } \sum_{i,j=1}^{2} w_j n_i z_i \phi_j \right),
\end{split}
\]
where, in the second step, we have applied integration by parts in space and noted that $w$ is independent of time. Here we have also used the notation
\[
\int_{\Sigma_{RL} } f \coloneqq   \int_{\Sigma} (f^R-f^L )
\]
with the obvious generalization to the set $\Sigma_{RL} \cap B(\eta )^c $, where $f^R$ and $f^L$ denote the limit values of $f$ at $\Sigma \equiv  \bigcup_{m=0}^{M-1} \Sigma_m$ from the right and left sides, respectively. 
 
Noting that $\phi$ has compact support in time we can integrate the third term on the right-hand side by parts in time to get
\[
\int_0^\infty t^{3\mu -1} \p_t \left( \int_{B(\eta )^c} w\cdot \phi  \right)=-(3\mu -1) \int_0^\infty t^{3\mu -2}  \int_{B(\eta )^c} w\cdot \phi ,
\]
and so we see that, since the Euler equations are satisfied in the classical sense in the region between the spirals (recall Definition~\ref{def_velocity}(3) and \eqref{euler_ss}), the sum of the integrals over $(0,\infty )\times {B(\eta )^c}$ on the right-hand side of the above calculation becomes
\[
\begin{split}
\lim_{\eta \to 0} \int_0^\infty& t^{3\mu -2} \int_{B(\eta )^c} \left( - (w\cdot \na ) w - (\mu -1) w + \mu (z\cdot \nabla )w \right)\cdot \phi \\
& = \lim_{\eta \to 0} \int_0^\infty t^{3\mu -2} \int_{B(\eta )^c} \na q \cdot \phi    = \lim_{\eta \to 0} \int_0^\infty t^{3\mu -2} \int_{\Sigma \cap B(\eta )^c } (q^R-q^L)n \cdot \phi
\end{split}
\]
where $q(z) \coloneqq p(z,1)$, and we used Definition~\ref{def_velocity}(3) in the first equality, the fact that $\mathrm{div}\,\phi=0$ and \eqref{growth_p} in the second equality.

Thus the weak form of the Euler equations holds if and only if 
\[\begin{split}
0&=\int_0^\infty t^{3\mu-2} \int_{\Sigma}  \left( \sum_{i,j=1}^2  n_i  \phi_j \left( (w_i^R w_j^R- w_i^L w_j^L)  -\mu z_i (w_j^R - w_j^L) \right)+ (q^R-q^L)n\cdot \phi \right)\\
&=\int_0^\infty t^{3\mu-2} \int_{\Sigma\cap B(\eta )^c}   \left( \left( n\cdot (w-\mu  z) \right) (w^R-w^L ) + (q^R-q^L)n \right) \cdot \phi 
\end{split}
\]
for all divergence-free $\phi \in C_0^\infty (\R^2\times (0, \infty );\R^2)$, where we used that fact that $n\cdot w^R = n\cdot w^L=: n\cdot w$ (recall Definition~\ref{def_velocity}(2)) in the second line. 
Since the vectors $w^R - w^L$ and $n$ are orthogonal (by the same fact), and $w^R-w^L\ne 0$ (by \eqref{v_jump}), this holds if and only if both matching conditions \eqref{velocity_matching}, \eqref{pressure_matching} hold.
\\

\section{Logarithmic spiral vortex sheets as weak solutions of the Euler equations}\label{sec_log_solutions}

Here we prove Theorem~\ref{thm_alex}, that is we show that in the case of logarithmic spirals, i.e. in the case of self-similar velocity profile \eqref{def_of_w}, the velocity matching \eqref{velocity_matching} and the pressure matching \eqref{pressure_matching} conditions are equivalent to the imaginary and real parts (respectively) of  \eqref{eq-disc2},
 \eqnb\label{eq_disc2_recall}
\frac{1}{\sinh (\pi A) }\sum_{k=0}^{M-1} \mathcal{A}_{mk}g_{k} = \frac{2i}{A} \left(\mu  + \frac{(1-2\mu)(a+i)}{2a}\right)^{*} = -(a^2+1-2\mu +2a\mu i )/2a^2,
\eqne
being valid for all $m\in \{ 0 , \ldots , M-1 \}$, where $\mathcal{A}_{mk}$ is defined in \eqref{def_of_Amk}.

\subsection{The velocity matching}\label{sec_vel_matching}
Here we show the claim for the velocity matching condition \eqref{velocity_matching}.

 Indeed noting  that, by Theorem~\ref{thm1}(ii), on $\Sigma$ we have $n \cdot w = n\cdot (w^R  + w^L )/2$, where $n(z)$ denotes the unit normal vector to $\Sigma $ at $z\in \Sigma$  we consider
 \[
 \left( \frac{w^R(Z_m (\theta )) + w^L (Z_m (\theta ))}{2} \right)^* = \ee^{-i\theta } \sum_{k=0}^{M-1} \frac{ag_k }{a+i}\ee^{a\frac{a-i}{a+i}(\theta - \theta_m )} \ee^{A(\theta_k - \theta )} \frac{\ee^{2\pi 1_{k<m} A}+ \ee^{2\pi 1_{k\leq m} A}}{1-\ee^{2\pi A}}.
 \] 
 Since $n$ has the same direction as 
 \[
  i \p_\theta Z_m (\theta ) = i (a+i) \ee^{a(\theta - \theta_m )} \ee^{i\theta }
 \]
 at any $z=Z_m(\theta )\in \Sigma_m(t)$, we see that \eqref{velocity_matching} holds for $z=Z_m (\theta )$ for all $m\in \{ 0, \ldots , M-1\}$, $\theta \in \R$, if and only if
 \[\begin{split}
 0&=\re \left( i \p_\theta Z_m (\theta ) \left( w (Z_m (\theta )) -\mu Z_m(\theta ) \right)^* \right)\\
  &= \re \left( i(a+i ) \ee^{a(\theta - \theta_m )} \left( \sum_{k=0}^{M-1}    \frac{ ag_k }{a+i}\ee^{a\frac{a-i}{a+i}(\theta - \theta_m )} \ee^{A(\theta_k - \theta )} \frac{\ee^{2\pi 1_{k<m} A}+ \ee^{2\pi 1_{k\leq m} A}}{1-\ee^{2\pi A}} -\mu \ee^{a(\theta - \theta_m )}  \right) \right)\\
 &=  \ee^{2a(\theta - \theta_m )} \im  \left( \frac{a\, \ee^{-\pi A}}{2\sinh (\pi A)} \sum_{k=0}^{M-1}   {g_k }  \ee^{A(\theta_k - \theta_m )} \left( {\ee^{2\pi 1_{k<m} A}+ \ee^{2\pi 1_{k\leq m} A}} \right) +\mu (a+i )   \right)\\
 &=  \ee^{2a(\theta - \theta_m )} \im  \left( \frac{a}{\sinh (\pi A)} \sum_{k=0}^{M-1}  \mathcal{A}_{mk} {g_k }  +\mu (a+i )   \right)
 \end{split}
 \]
 for all $\theta \in \R$ and $m\in\{0,\ldots , M-1\}$, which is equivalent to \eqref{eq_disc2_recall}, as required.

\subsection{The pressure matching condition}\label{sec_p_matching}

Here we show the claim for pressure matching \eqref{pressure_matching}. We recall \eqref{def_of_p} that the self-similar pressure profile $q$ is given by \eqref{Bernoulli_complex_ss},
\eqnb\label{def_of_q_recall}
q(z) \coloneqq - \re \left( (2\mu -1) \Phi(z) - \mu z w^* \right) - \frac{1}2 |w|^2 .
\eqne

We will show below that 
\eqnb\label{to_show_jump_of_w^2}
|w^R (Z_m (\theta ))|^2 - |w^L (Z_m (\theta ))|^2=- \frac{4\ee^{2a(\theta - \theta_m )}a^2 g_m}{a^2+1}   \re \underbrace{\left( \frac{1}{\sinh(\pi A)} \sum_{k=0}^{M-1} \mathcal{A}_{mk} g_k \right)}_{=:K}.
\eqne

Then the claim follows by noting that, by Proposition~\ref{prop_j1}, taking $z=Z_m (\theta )$ in $\Phi (z)$ and $w^* (z)$ (recall \eqref{def_of_potential_fcn_Phi} and \eqref{def_of_w}), makes the ``left and right differences'' of the terms in the sum $\sum_{k=0}^{M-1}$ vanish, except  for $k=m$, and the winding number $\jj (\ee^{a(\theta - \theta_m )}, \theta , k)$ becomes either $1_{k<m}$ or $1_{k\leq m}$, depending on the side of the spiral. To be more precise Proposition~\ref{prop_j1}(iv) gives 
\[
\Phi^R (Z_m (\theta )) - \Phi^L (Z_m (\theta )) = \sum_{k=0}^{M-1} g_k \ee^{Aai(\theta - \theta_m )} \ee^{A(\theta_k - \theta )} \underbrace{ \frac{\ee^{2\pi 1_{k<m} A }-\ee^{2\pi 1_{k\leq m}A } }{1-\ee^{2\pi A}}}_{=\delta_{km}}= g_m \ee^{2a(\theta - \theta_m )}
\]
for all $\theta \in \R$. Thus, since $w^* = \frac{iA}{z} \Phi$, \eqref{def_of_q_recall} gives that 
\[\begin{split}
q^R (Z_m (\theta )) - q^L (Z_m (\theta )) &= g_m \ee^{2a(\theta - \theta_m )}\left(-\re (2\mu -1 - \mu i A ) + \frac{2a^2 }{(a^2+1)}   \re \,K \right),\\
&= \frac{-2a^2 g_m \ee^{2a(\theta - \theta_m )}}{a^2+1 } \left( \frac{2\mu -a^2-1 }{2a^2}-\re \,K \right) .
\end{split}
\]
Since the right-hand side vanishes for all $\theta \in \R$ if and only if $\re\,K = (2\mu-a^2-1)/2a^2$, which is the real part of \eqref{eq-disc2}, as required. \\

It remains to verify \eqref{to_show_jump_of_w^2}. To this end we note that 
\[
1=\ee^{A(\theta - \theta_m )} r^{\frac{2i}{a+i}}
\]
for $z=Z_m (\theta )$, where $r=|z|=\ee^{a(\theta - \theta_m )}$, and so
\[
w^* (Z_m (\theta )) =  \frac{2ra\ee^{-i\theta }}{a+i} \sum_{k=0}^{M-1} g_k  \ee^{A (\theta_k - \theta_m )} \frac{\ee^{2\pi \jj (\ee^{a(\theta - \theta_m )},\theta , k)A}}{1-\ee^{2\pi A}}.
\]
Thus 
\eqnb\label{difference_of_wR_wL}
\begin{split}
&|w^R (Z_m (\theta ))|^2 - |w^L (Z_m (\theta ))|^2 \\
&= \frac{4r^2a^2}{(a^2+1)} \sum_{k,l =0}^{M-1} \underbrace{\frac{g_k g_{l} \ee^{A (\theta_k - \theta_m )}\ee^{A^* (\theta_{l} - \theta_m )}}{(1-\ee^{2\pi A} )(1-\ee^{2\pi A^*})}}_{=:C_{k,l}}\underbrace{\left( \ee^{2\pi (\jj^R ( k) A + \jj^R (l)A^*)}  - \ee^{2\pi ( \jj^L ( k) A + \jj^L (l)A^*)}  \right)}_{=:B_{k,l}}   
\end{split}
\eqne
where we used the shorthand notation $\jj^R (k ) \coloneqq \jj^R (|Z_m(\theta )|,\theta ,k)$, $\jj^L (k ) \coloneqq \jj^L (|Z_m(\theta )|,\theta ,k)$. Note that $\jj^R (k)= 1_{k<m}$ and $\jj^L (k) =\jj (k) = 1_{k\leq m}$ (by Proposition~\ref{prop_j1}), and so $B_{k,l}=0$ if both $k$ and $l$ differ from $m$, which gives that
\[
\sum_{k,l=0}^{M-1} C_{k,l} B_{k,l} = \sum_{\substack{k=0\\ k\ne m }}^{M-1} (C_{k,m} B_{k,m}  + C_{m,k} B_{m,k}) + C_{m,m}B_{m,m} .
\]

Observing that  
\[\begin{split}
B_{m,m} &= 1-\ee^{2\pi (A+A^*) }=\frac12 \left( 1+ \ee^{2\pi A } \right) \left( 1- \ee^{2\pi A^* } \right)+ \frac12 \left( 1+ \ee^{2\pi A^* } \right) \left( 1- \ee^{2\pi A } \right)\\
&= 2\re \left(  \left( 1- \ee^{2\pi A^* } \right)\ee^{\pi A} \cosh (\pi A) \right) ,
\end{split}
\]
and noting that $C_{k,l}= C_{l,k}^*$ and $B_{k,l}= B_{l,k}^*$  with 
\[
B_{k,m} = \left( 1-\ee^{2\pi A^* } \right)\begin{cases}
\ee^{2\pi A }  &\qquad \text{ if }  k<m ,\\
1 &\qquad \text{ if }  k>m,
\end{cases}
\]
we obtain
\[\begin{split}
\sum_{k,l=0}^{M-1} &C_{k,l} B_{k,l} = 2\re \left(  \left( 1- \ee^{2\pi A^* } \right)\ee^{\pi A} C_{m,m}\cosh (\pi A)  + \sum_{\substack{k=0\\ k\ne m }}^{M-1} C_{k,m} B_{k,m} \right)\\ 
&=2\re \left( (1-\ee^{2\pi A^* })\ee^{\pi A} \sum_{k=0}^{M-1} C_{k,m} \begin{cases} \ee^{\pi A } \qquad &k<m \\
\cosh (\pi A ) &k=m \\
\ee^{-\pi A} &k>m 
\end{cases}\hspace{0.2cm} \right) \\
&= -g_m \,\re \left( \frac{1}{\sinh(\pi A)} \sum_{k=0}^{M-1} \mathcal{A}_{mk} g_k \right).
\end{split}
\]
Plugging this into \eqref{difference_of_wR_wL} gives \eqref{to_show_jump_of_w^2}, as required.

\section{Sharpness of Delort's Theorem~\ref{thm_delort}}\label{sec_sharpness}

Here we prove that Theorem~\ref{thm_delort} of Delort \cite{delort} cannot be generalized to $\sigma$-finite measures, or that its generalization produces nonunique solutions; namely we prove Corollary~\ref{cor_sharpness}.\\

We pick any $t_0>0$ as well as any parameters (\ref{def_of_alex}) satisfying (\ref{constraint_for_alex}) with $M=1$. Then, in view of the growth estimate of the vorticity as $|x|\to \infty$ (see \eqref{vort_growth_prandtl} above), $\mathrm{curl}\, v(t_0)\in H_{loc}^{-1}$ (see \cite[Theorem~1.1]{CS}; see also Theorem~\ref{thm1}(iii)). By time-reversibility and Theorem \ref{thm_alex}, we obtain that $u(t,x)\coloneqq v(t_0-t,-x)$ satisfies the Euler equations (\ref{weak_euler}) for $t\in(0,t_0)$. However, we will show below that
\begin{equation}\label{na_spirali}
\int_{B(0,r)}|w|^2=C(a,g,\mu)r^4,
\end{equation}
which, due to the self-similarity (\ref{def_of_v}), implies that
\[
\int_{B(0,1)}|u(t)|^2=(t-t_0)^{4\mu-2}\int_{B(0,(t-t_0)^{-\mu})}|w|^2=C(a,g,\mu)(t-t_0)^{-2}\rightarrow \infty
\]   
as $t\rightarrow t_0$. This shows that the $L_{loc}^\infty(0,\infty;L^2_{loc}(\R^2))$ property of $u$ is violated.

In order to verify \eqref{na_spirali} we use spiral coordinates to parametrize a ball $B(0,r)\subset \R^2$, that is
\[
B(0,r) = \{ P(\theta, \theta' ) \colon  \theta \in (-\infty ,  \theta'+ a^{-1} \ln r ), \quad \theta'\in (0,2\pi ] \},
\]
where 
\[
P(\theta, \theta')=\begin{pmatrix}
P_1 (\theta, \theta' )\\
P_2 (\theta, \theta' )
\end{pmatrix} \coloneqq \ee^{a(\theta-\theta')}\begin{pmatrix}
\cos \theta \\
\sin \theta
\end{pmatrix} =\ee^{a(\theta-\theta')}\ee^{i\theta}.
\]
Note that
\[ \det \nabla P(\theta,\theta')= \begin{vmatrix}
a P_1 -P_2 & -aP_1 \\
aP_2 + P_1 & -a P_2  
\end{vmatrix} =  a(P_1^2+P_2^2)=a\ee^{2a(\theta-\theta')}. 
\]
Moreover, (\ref{def_of_w}) shows that
\begin{equation}\label{po_spiralach}
w(z\ee^{(i+a)\alpha })=\ee^{(i+a)\alpha}w(z) \qquad \text{ for all }z\in \C, \alpha \in \R,
\end{equation}
which gives that $w(P(\theta,\theta'))= w(\ee^{a(\theta-\theta')}\ee^{i\theta})=\ee^{(a+i)(\theta-\theta')} w(\ee^{i\theta' }) = \ee^{(a+i)(\theta-\theta')} w(P(\theta', \theta' )) $, and so 
\begin{eqnarray*}
\int_{B(0,r)}|w|^2 &=&\int_0^{2\pi}\int_{-\infty}^{\theta'+\frac{1}{a}\ln r} |w(P(\theta, \theta'))|^2 a\ee^{2a(\theta-\theta')} \d \theta\, \d \theta'\\
&=&\int_0^{2\pi}a|w(P(\theta',\theta'))|^2\int_{-\infty}^{\theta'+\frac{1}{a}\ln r}\ee^{4a(\theta-\theta')} \d \theta \, \d \theta'\\
&=& 1/4\int_0^{2\pi}|w(P(\theta',\theta'))|^2 \ee^{4a\frac{\ln r}{a}} \d \theta'=\frac{r^4}{4}\int_0^{2\pi}|w(P(\theta',\theta'))|^2 \d \theta',
\end{eqnarray*} 
which gives \eqref{na_spirali}, as required.

\section{The Biot-Savart integral}\label{sec_bs}
In this section we prove Theorem~\ref{thm4}, that we show that if the vorticity is given in terms of circulation $\Gamma_m$ in \eqref{log_spirals} on the $m$-th spiral, $m\in \{ 0, \ldots , M-1 \}$, the compatibility conditions \eqref{cc-1} are satisfied, and $\overline{v}$ is given by the Biot-Savart law \eqref{Biot_Sav}, i.e.
\begin{align*}
\overline{v}(z,t) \coloneqq  \left(\frac{1}{2\pi i}\int_{-\infty}^{\infty}\sum_{k=0}^{M-1}\frac{2at^{2\mu-1}g_{k}\ee^{2a(\theta'-\theta_{k})}\,d\theta'}{z-t^{\mu}\ee^{a(\theta' - \theta_{k})}\ee^{i\theta'}}\right)^{*},
\end{align*}
then $\overline{v}(z,t) = v(z,t)$ for every $t>0$, $z\not \in \Sigma (t) \cup \{0 \}$. Namely we show that the velocity field $\overline{v}$ recovered from the Birkhoff-Rott approach \eqref{spirals_eq}, \eqref{cc-1} is the same as our explicit formula $v$ (given by \eqref{def_of_w}).

First we note that $ \overline{v}(z,t) = t^{\mu -1} \overline{w} (z/t^\mu )$, where
\[
\overline{w}(z) \coloneqq \overline{v}(z,1) =  \left(\frac{1}{2\pi i}\int_{-\infty}^{\infty}\sum_{k=0}^{M-1}\frac{2ag_{k}\ee^{2a(\theta'-\theta_{k})}\,d\theta'}{z-\ee^{a(\theta' - \theta_{k})}\ee^{i\theta'}}\right)^{*}.
\]
Applying the change of variable $\theta' \mapsto \theta'- \theta_k =: \sigma$ for each $k\in \{ 0, \ldots , M-1 \}$, we obtain
\eqnb\label{w_integral_proper}
\ee^{-i\theta}\overline{w}(r\ee^{i\theta})= \left(\frac{1}{2\pi i}\int_{-\infty}^{\infty}\sum_{k=0}^{M-1}\frac{2ag_{k}\ee^{2a\sigma}\,d\sigma}{r-\ee^{(a+i)\sigma+i\Delta_{k}}}\right)^{*},
\eqne
where, for brevity, we used the notation 
\[ \Delta_{k}\coloneqq \theta_{k}-\theta .
\]

We fix $r> 0$ and $\theta \in \R$ such that 
\eqnb\label{none_on_axis}
a(2\pi j+\Delta_k )+\ln r \ne 0
\eqne
for all $j\in \Z$, $k\in \{0, \ldots , M-1\}$. We will prove that 
\eqnb\label{toshow_prop}
\frac{1}{2\pi i}\int_{-\infty}^{\infty}f(\sigma ) \d \sigma = \sum_{k=0}^{M-1} \frac{g_{k}}{r(a+i)}r^{\frac{2a}{a+i}}e^{A\Delta_{k}}\frac{e^{2\pi \jj (r,\theta, k) A}}{1-e^{2\pi A}},
\eqne
where 
\eqnb\label{ass_on_f}
f(\sigma ) \coloneqq  \sum_{k=0}^{M-1} \frac{g_k e^{2a\sigma}}{r-e^{(a+i)\sigma+i\Delta_{k}}}, \quad \sigma \in \R. 
\eqne
Note that \eqref{none_on_axis} is equivalent to $r\ee^{i\theta } \not \in \Sigma \cup \{ 0 \}$ and also equivalent to the denominator in \eqref{ass_on_f} not vanishing for any $\sigma$. Then multiplying \eqref{w_integral_proper} by $\ee^{i\theta }$ gives $\overline{w}(z) = w(z)$ for $z\not \in  \Sigma \cup \{ 0 \}$, as required.

In the remainder of this section we prove \eqref{toshow_prop}. 

We first note that the compatibility conditions \eqref{cc-1} imply that  $f$ has an equivalent form\footnote{Observe that $\frac{\ee^{2a\sigma}}{r-\ee^{(a+i)\sigma+i\Delta_{k}}} = \frac{r^{2}\ee^{-2i\sigma-2i\Delta_{k}}}{r-\ee^{(a+i)\sigma+i\Delta_{k}}} - r\ee^{-2i\sigma-2i\Delta_{k}} - \ee^{(a-i)\sigma-i\Delta_{k}}$, cf. \eqref{intro_alg_id}.},
\eqnb\label{ass_on_f-2}
f(\sigma ) = \sum_{k=0}^{M-1} \frac{g_k r^{2}e^{-2i\sigma-2i\Delta_{k}}}{r-e^{(a+i)\sigma+i\Delta_{k}}}.
\eqne

We consider the poles of $f$, that is we set
\eqnb\label{def_sigmaj}
\begin{split}\sigma_{j}=\sigma_j (r,\theta , k) &\coloneqq  \frac{a\ln r - (2\pi j+\Delta_k)}{1+a^{2}} - i\frac{a(2\pi j+\Delta_k)+\ln r}{1+a^{2}} \\
& = \frac{a-i}{1+a^{2}}\ln r - (2\pi j+\Delta_k)\frac{1+ai}{1+a^{2}}. 
\end{split}
\eqne
for $k\in \{ 0, \ldots , M-1 \}$, $r>0$ and  $j\in \Z$. Observe that the l'H\^{o}pital rule gives that
\[
\begin{aligned}
&\lim_{\sigma\to\sigma_{j}}\frac{e^{2a\sigma}(\sigma-\sigma_{j})}{r-e^{(a+i)\sigma+i\Delta_{k}}} = \lim_{\sigma\to\sigma_{j}} \frac{\partial_{\sigma}(e^{2a\sigma}(\sigma-\sigma_{j}))}{\partial_{\sigma}(r-e^{(a+i)\sigma+i\Delta_{k}})}\\
&\qquad = \lim_{\sigma\to\sigma_{j}} -\frac{2ae^{2a\sigma}(\sigma-\sigma_{j})+e^{2a\sigma}}{(a+i)e^{(a+i)\sigma+i\Delta_{k}}}
= -\frac{e^{2a\sigma_{j}}}{(a+i)e^{(a+i)\sigma_{j}+i\Delta_{k}}}.
\end{aligned}
\]
This shows that each $\sigma_j$ ($j\in \Z$) is a simple pole of $f$ and
\eqnb\label{residua}
\res (f,\sigma_j ) = - \sum_{k=0}^{M-1} \frac{g_k \ee^{2a\sigma_j }}{(a+i) \ee^{(a+i)\sigma_j +i\Delta_k }}.
\eqne
We aim to find the integral on the left-hand side of \eqref{toshow_prop} by contour integration as sketched in Figure~\ref{fig_contours} below. We set 
\eqnb\label{def_of_phi0}
\varphi_0 \coloneqq   \pi +\mathrm{arctan} \left( a\right)  +\varepsilon,
\eqne
where $\varepsilon >0$ is sufficiently small so that $\varphi_0 \in (\pi , 3\pi/2)$ and we also set
\eqnb\label{def_dj}
d_j^2 \coloneqq \left| \frac{\sigma_j + \sigma_{j+1}}2 \right|^2= \frac{\ln^2 r + ((2j+1)\pi + \Delta_k )^2 }{1+a^2}
\eqne
and
\[\begin{split}
\Gamma^j &\coloneqq (-d_j, d_j ) ,\\
\Gamma_{-}^{j}&\coloneqq  \{d_j e^{i\varphi} \colon  \varphi\in[\pi,\varphi_0]\}, \\
\Gamma_{+}^{j}&\coloneqq  \{d_j e^{i\varphi} \colon  \varphi\in[\varphi_0,2\pi]\},
\end{split}
\]
with the anticlockwise orientation (see Figure~\ref{fig_contours}), for $j\geq \jj$. In fact $j\geq \jj$ if and only if $\im\, \sigma_j <0 $, which gives another intuition regarding the meaning of the winding index $\jj= \jj (r,\theta , k)$, introduced in \eqref{def_of_jj}. This implies that, for large $j$, the closed contour 
\[
\Gamma^j \cup \Gamma_{-}^j\cup \Gamma_{+}^j
\]
encloses $\{ \sigma_{\jj} , \sigma_{\jj+1}, \ldots , \sigma_j \}$, see Figure~\ref{fig_contours}. 
 \begin{figure}[h]
\centering
 \includegraphics[width=0.7\textwidth]{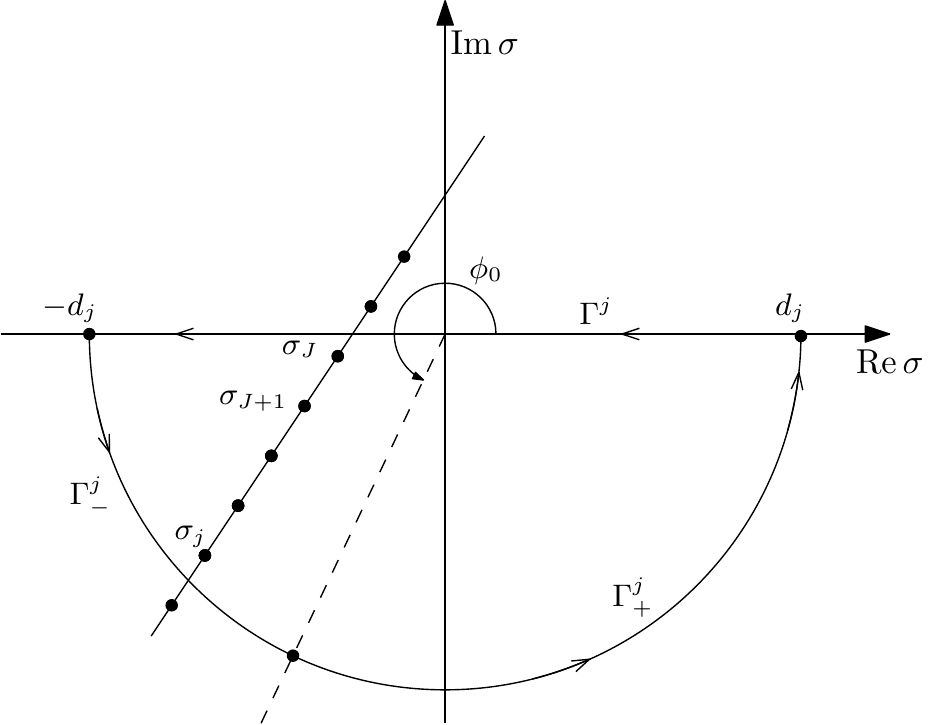}
 \nopagebreak
  \captionof{figure}{A sketch of the contour integration.}\label{fig_contours} 
\end{figure}

If fact, we can say more: the contour $\Gamma_{-}^j\cup \Gamma_{+}^j$ is separated from the poles $\sigma_l$'s by a positive distance. Namely, there exists $j_0\in \Z$ such that
\eqnb\label{cut_in_the_middle}
\left| d_j \ee^{i\varphi } - \sigma_l \right| \geq  \frac{\pi }{4(1+a^2)^{1/2}}
\eqne 
for all $\varphi \in [\pi, 2\pi]$, $l\geq \jj$ and $j\geq j_0$, since for sufficiently large $j$ and every $l\geq \jj$
\[ | d_{j} \ee^{i\varphi} - \sigma_{l}|\ge |d_j - |\sigma_{l}||  \geq \min (d_j - |\sigma_j |,|\sigma_{j+1}|- d_j ) \to \frac{\pi }{2(1+a^2)^{1/2}}\]
as $j\to \infty$, where the last convergence can be verified by a direct calculation using \eqref{def_dj} and the fact that
\[
| \sigma_j |^2 = \frac{\ln^2 r + (2j\pi + \Delta_k )^2 }{1+a^2},
\]
recall \eqref{def_sigmaj}. This shows \eqref{cut_in_the_middle}, see also Figure~\ref{fig_contours}. In Step 1 below we deduce from \eqref{cut_in_the_middle} that 
\eqnb\label{gamma_minus}
\left| \int_{\Gamma_{-}^j} f(\sigma ) \d \sigma \right| \to 0 \qquad \text{ as }j\to \infty.
\eqne
We also show, in Step 2 below, that our choice of $\varphi_0$ \eqref{def_of_phi0} and the equivalent form \eqref{ass_on_f-2} of $f$ give that 
\eqnb\label{gamma_plus}
\left| \int_{\Gamma_{+}^j} f(\sigma ) \d \sigma \right| \to 0 \qquad \text{ as } j\to \infty .
\eqne
The claim of the proposition follows by writing
\[\begin{split}
\frac{1}{2\pi i} \int_{\R} f (\sigma ) \d \sigma  &= \lim_{j\to \infty } \frac{1}{2\pi i} \int_{-d_j}^{d_j} f (\sigma ) \d \sigma \\
&= -\lim_{j\to \infty }\sum_{l=\jj}^j \res \, (f,\sigma_l ) \\
&= \sum_{l\geq \jj}  \sum_{k=0}^{M-1} \frac{g_k \ee^{2a\sigma_l }}{(a+i) \ee^{(a+i)\sigma_l +i\Delta_k }} \\
&=\sum_{k=0}^{M-1}  \frac{g_k }{r(a+i)} \sum_{l\geq \jj}   \ee^{2a\sigma_l }  \\
&= \sum_{k=0}^{M-1}  \frac{g_k }{r(a+i)} r^{\frac{2a}{a+i}} \ee^{A\Delta_k }\sum_{l\geq \jj}   \ee^{2\pi A l}  \\
&=  \sum_{k=0}^{M-1} \frac{g_{k}}{r(a+i)}r^{\frac{2a}{a+i}}e^{A\Delta_{k}}\frac{\ee^{2\pi \jj A}}{1-\ee^{2\pi A }},
\end{split}\]
where we used \eqref{gamma_plus}, \eqref{gamma_minus} in the third line, \eqref{residua} in the fourth line, and noted that $(a+i ) \sigma_l + i\Delta_k =\ln r -2\pi i l $ in the fifth line.

It therefore remains to verify \eqref{gamma_minus}, \eqref{gamma_plus}, which we discuss in Steps 1 and 2 below.\\

\noindent\texttt{Step 1.} We prove \eqref{gamma_minus}. \\

We first deduce from \eqref{cut_in_the_middle} that 
\eqnb\label{denominator_bound}
\left| \ee^{(a+i)\sigma + i \Delta_k}-r \right| \geq c  \qquad \text{ for } \sigma \in \Gamma_{-}^j\cup \Gamma_{+}^j
\eqne
where $c=c(k,r,\theta )$ is another positive constant. Indeed, letting $\varepsilon$ denote the constant on the right-hand side of \eqref{cut_in_the_middle}, we see that  
\[\sigma \in \Gamma_{-}^j\cup \Gamma_{+}^j \subset \left( \bigcup_{l\in\Z} B\left( \sigma_{l}, \varepsilon \right) \right)^c  =\left( \bigcup_{l\in\Z} B\left( \frac{\ln r -i (2\pi l + \Delta_k )}{a+i} , \varepsilon \right) \right)^c . \]
Multiplying $\sigma $ by $(a+i)$ and adding $i\Delta_k$ gives
\[
(a+i)\sigma+i\Delta_k \in\left(\bigcup_{l\in\Z} B(\ln r+2l\pi i,\ve_{0})\right)^{c},
\] 
where $\varepsilon_0 \coloneqq \varepsilon (1+a^2)^{1/2}$. Thus \eqref{denominator_bound} follows from Lemma~\ref{lem-11-aabb}. We parametrize $\Gamma_{-}^j$ by setting 
\eqnb\label{param_gammaj}
\gamma_{j}(\varphi)\coloneqq  d_j \ee^{i\varphi} = d_j (\cos\varphi+i\sin\varphi) 
\eqne
for $\varphi\in[\pi, \varphi_0]$. We observe that 
\[ \re \,\gamma_j (\varphi ) = d_j \cos \varphi \leq d_j \cos \varphi_0 <0  \]
for such $\varphi$ to find that
\[\begin{split}
\left|\int_{\Gamma_{-}^{j}} \frac{\ee^{2a\sigma}\,\d\sigma}{r-\ee^{(a+i)\sigma+i\Delta_k}} \right| & \le  \int_{\pi}^{\varphi_{0}} \frac{\ee^{2a\re\,\gamma_{j}(\varphi)}|\gamma_{j}'(\varphi)|\,\d\varphi}{|r-\ee^{(a+i)\gamma_{j}(\varphi)+i\Delta_k}|}\le  c^{-1}d_j \int_{\pi}^{\varphi_{0}}\ee^{2ad_j \cos\varphi_{0}}\,\d\varphi\\
& \leq  c^{-1}(\varphi_{0}-\pi) d_j \ee^{2ad_j \cos\varphi_{0}} \to 0 
\end{split}
\]
as $j\to \infty$, as required. \\

\noindent\texttt{Step 2.} We prove \eqref{gamma_plus}.\\

Using the equivalent form \eqref{ass_on_f} of $f$ we see that the claim is equivalent to 
\eqnb\label{step1_toshow}
\int_{\Gamma_{+}^{j}} \frac{r^{2}\ee^{-2i\sigma - 2i\Delta_{k}}\,\d\sigma}{r-\ee^{(a+i)\sigma+i\Delta_{k}}}\to 0
\eqne
as $j\to \infty$. We can now consider the parametrization $\gamma_j (\varphi )$, defined in \eqref{param_gammaj} above, for $\varphi \in [ \varphi_0, 2\pi ]$, to see that, for all such $ \varphi$,
\eqnb\label{im_is_nonpositive}
\im\, \gamma_{j}(\varphi) \leq 0,
\eqne
as well as
\eqnb\label{re_away_from_0}
\re \left( (a+i)\gamma_{j}(\varphi)+i\Delta_{k} \right) \equiv d_j(a\cos\varphi-\sin\varphi) \geq d_j c 
\eqne
for some $c>0$, which we now verify.\\

\noindent Case $\varphi\in [\varphi_{0}, 3\pi/2)$. Then $0> \cos\varphi\ge\cos\varphi_{0}$ and $\sin\varphi\le\sin\varphi_{0}<0$, which implies that
\[
 d_j (a\cos\varphi-\sin\varphi)
\ge d_j (a\cos\varphi_{0}-\sin\varphi_{0}) = d_j  \cos\varphi_{0}\,(a-\tan\varphi_{0}) >0, 
\]
where we used the fact that $a = \tan (\pi + \mathrm{arctan}(a)) < \tan \varphi_0 $ (recall \eqref{def_of_phi0}) to obtain the last inequality.\\

\noindent Case $\varphi\in [3\pi/2, 7\pi/4)$. Then $\cos\varphi\ge 0$ and $\sin\varphi\le\sin(7\pi/4)=-\sqrt{2}/2<0$, which gives that 
\[
 d_j (a\cos\varphi-\sin\varphi)  \geq  d_j \sqrt{2}/2>0.
\]

\noindent Case $\varphi\in [7\pi/4,2\pi]$. Then $\cos\varphi\ge \cos(7\pi/4)=\sqrt{2}/2>0$ and $\sin\varphi\le 0$, which implies that
\[ d_j (a\cos\varphi-\sin\varphi) \geq d_j a \sqrt{2}/2>0.\]

Thus \eqref{re_away_from_0} follows by setting
\[
c\coloneqq  \min \left( (a- \tan \varphi_0 ) \cos\varphi_{0}  , \sqrt{2}/2, a\sqrt{2}/2 \right) >0.
\]
The claim \eqref{step1_toshow} now follows by writing
\[
\begin{split}
\left|\int_{\Gamma_{+}^{j}}\frac{r^{2}\ee^{-2i\sigma - 2i\Delta_{k}}\,\d\sigma}{r-\ee^{(a+i)\sigma+i\Delta_{k}}}\right|
&\le r^2\int_{\varphi_{0}}^{2\pi} \frac{|\ee^{-2i\gamma_{j}(\varphi) - 2i\Delta_{k}}||\gamma_{j}'(\varphi)|\,\d\varphi}{|r-\ee^{(a+i)\gamma_{j}(\varphi)+i\Delta_{k}}|}\\
&=r^2d_j \int_{\varphi_{0}}^{2\pi} \frac{\ee^{2\mathrm{Im}\,\gamma_{j}(\varphi)}\,\d\varphi}{|r-\ee^{(a+i)\gamma_{j}(\varphi)+i\Delta_{k}}|}\\
&\leq r^2 d_j \int_{\varphi_{0}}^{2\pi} \frac{\d\varphi}{\ee^{\re ((a+i)\gamma_j (\varphi)+i\Delta_{k} )}-r}\\
& \leq r^2 (2\pi-\varphi_{0})\frac{d_j}{\ee^{d_j c}-r} \to 0
\end{split}
\]
as $j\to \infty$, where we used \eqref{im_is_nonpositive} and the triangle inequality in the second inequality and \eqref{re_away_from_0} in the last inequality.  \\

\begin{remark}\label{rem_average}
We note that for $z\in \Sigma_m$ (for some $m$) the only difference in the above contour integration is the fact that the contour in \eqref{toshow_prop} passes through the pole. (Recall \eqref{def_sigmaj} that $\im\,\sigma_j=0$ if and only if \eqref{none_on_axis} fails.) However, since the pole is simple, such integral equals the average of the integrals along contours that ``avoid'' the pole from each side. This shows that the velocity field $\overline{v}$ on $\Sigma_m(t)$ equals the average of the limit velocities from the two sides of $\Sigma_m(t)$, as mentioned below Theorem~\ref{thm4}.
\end{remark}

\section*{Appendix}
Here we verify that for Alexander's spirals \eqref{def_of_alex} the left-hand side of the complex constraint \eqref{eq-disc2} equals $\coth (\pi A/M)$. Indeed 
\begin{align*}
& \sum_{k=0}^{M-1}\ee^{A(\theta_{k}-\theta_{m})}
\left\{\begin{aligned}
&\ee^{-\pi A}, && \text{if} \ \theta_{k}>\theta_{m},\\
&\cosh(\pi A), && \text{if} \ \theta_{k}=\theta_{m},\\
&\ee^{\pi A}, && \text{if} \ \theta_{k}<\theta_{m},
\end{aligned}\right. \\
&=\sum_{k=0}^{m-1} \ee^{2\pi A (k-m)/M}\ee^{\pi A} + \cosh(\pi A) + \sum_{k=m+1}^{M-1} \ee^{2\pi A(k-m)/M}\ee^{-\pi A} \\
&=\ee^{-2\pi A m/M}\sum_{k=0}^{m-1} \ee^{2\pi A k/M}e^{\pi A} + \cosh(\pi A) + \sum_{k=1}^{M-m-1} \ee^{2\pi A k/M}e^{-\pi A} \\
&=\ee^{-2\pi A m/M}\frac{1-\ee^{2\pi A m/M}}{1-\ee^{2\pi A /M}}\ee^{\pi A} + \cosh(\pi A ) + \left(\frac{1-\ee^{2\pi A (M-m)/M}}{1-\ee^{2\pi A/M}}-1\right)\ee^{-\pi A} \\
&=\frac{\ee^{-2\pi A m/M}-1}{1-\ee^{2\pi A/M}}\ee^{\pi A} + \cosh(\pi A ) + \left(\frac{1-\ee^{2\pi A (M-m)/M}}{1-\ee^{2\pi A/M}}-1\right)\ee^{-\pi A} \\
&=-\frac{\ee^{\pi A}}{1-\ee^{2\pi A /M}} + \frac{\ee^{\pi A}+\ee^{-\pi A}}{2} + \frac{\ee^{-\pi A}}{1-\ee^{2\pi A /M}}-\ee^{-\pi A} \\
&=-\frac{\ee^{\pi A}-\ee^{-\pi A}}{1-\ee^{2\pi A /M}} + \frac{\ee^{\pi A}-\ee^{-\pi A}}{2} = \sinh (\pi A) \frac{1+\ee^{2\pi A /M}}{\ee^{2\pi A /M}-1}
\end{align*}
for each $m\in \{ 1, \ldots , M-2\}$, and the cases $m=0$ or $M-1$ follow analogously.

\section*{Acknowledgments}
T.C. was partially supported by the National Science Centre grant SONATA BIS 7 number UMO-2017/26/E/ST1/00989. WSO was supported in part by the Simons Foundation. P.K. wishes to thank T.C. for his kind hospitality at Institute of Mathematics Polish Academy of Science.

\end{document}